\newcommand{\R}[0]{{\mathbb{R}}}
\newcommand{\C}[0]{{\mathbb{C}}}
\newcommand{\N}[0]{{\mathbb{N}}}
\begin{document}

\title{Optimal Newton-Secant like methods without memory for solving
nonlinear equations with its dynamics
}

\titlerunning{Optimal Newton-Secant like methods without memory for solving
nonlinear equations with its dynamics}        

\author{Mehdi Salimi$^a$\thanks{Corresponding author: mehdi.salimi@tu-dresden.de} \and Taher Lotfi$^b$ \and Somayeh Sharifi$^c$
\and Stefan Siegmund$^a$} 


\institute{$^{a}$Department of Mathematics, Technische
Universit{\"a}t Dresden, 01062 Dresden, Germany\\
$^{b}$Department of Mathematics, Hamedan Branch,
Islamic Azad University, Hamedan, Iran\\
$^{c}$Young Researchers and Elite Club, Hamedan Branch, Islamic
Azad University, Hamedan, Iran}

\date{Received: date / Accepted: date}

\maketitle

\begin{abstract}
We construct two optimal Newton-Secant like iterative methods for
solving non-linear equations. The proposed classes have
convergence order four and eight and cost only three and four
function evaluations per iteration, respectively. These methods
support the Kung and Traub conjecture and possess a high
computational efficiency. The new methods are illustrated by numerical experiments
and a comparison with some existing optimal methods.
We conclude with an investigation of the basins of
attraction of the solutions in the complex plane.
\keywords{Multi-point iterative methods; Newton-Secant method;
Kung and Traub's conjecture.}
\end{abstract}

\section{Introduction}
\label{intro}A main tool for solving nonlinear problems is the
approximation of simple roots $x^*$ of a nonlinear equation
$f(x^*)=0$ with a scalar function $f : D \subset \R \to \R$ which
is defined on an open interval $D$ (see e.g.\
\cite{Ostrowski,Petkovic,Petkovic1,Traub} and the references
therein). The secant method is a simple root-finding algorithm
which can be traced back to a historic precursor called ``rule of
double false position'' \cite{Papakonstantinou}. A modern way to
view the secant method would be to replace the derivative in the
Newton-Raphson method $x_{n+1} = x_n - \frac{f(x_n)}{f'(x_n)}$ by
a finite-difference approximation. The Newton-Raphson method is
one of the most widely used algorithms for finding roots. It is of
second order and requires two evaluations for each iteration step,
one evaluation of $f$ and one of $f'$. Newton-Raphson iteration is
an example of a one-point iteration, i.e.\ in each iteration step
the evaluations are taken at one point. Multiple-point methods
evaluate at several points in each iteration step and in principle
allow for a higher convergence order with a lower number of
function evaluations. Kung and Traub \cite{Kung} conjectured that
no multi-point method without memory with $k$ evaluations could
have a convergence order larger than $2^{k-1}$. A multi-point
method with convergence order $2^{k-1}$ is called optimal.

In this paper we construct two new optimal multi-point methods. We
present a two-point iteration with convergence order four which
requires two evaluations of $f$ and one evaluation of $f'$ and a
three-point iteration with convergence order eight which requires
three evaluations of $f$ and one evaluation of $f'$. Both methods
combine the Newton and Secant methods and utilize the idea of
weight functions to obtain optimality in the sense of Kung and
Traub. For an alternative construction of an optimal three-point
method with convergence order eight which also uses carefully
chosen weight functions, see \cite{LotfiEtAl2013}.

For well known two-point methods without memory one can consult
e.g.\ Jarrat \cite{Jarratt}, King \cite{King} and Ostrowski
\cite{Ostrowski}. Bi et al.\ \cite{Bi1} developed an optimal
three-point iterative method with convergence order eight. Wang
and Liu used weight functions to construct optimal three-point
methods \cite{Liu1} and \cite{Liu3} and optimal convergence order
eight was achieved by Geum and Kim \cite{Geum1} and \cite{Geum3}
utilizing parametric weight functions. Based on rational
interpolation and weight functions, Sharma et al.\ introduced two
three-point methods \cite{Sharma1,Sharma2}, see also Cordero et
al. \cite{Cordero1}-\cite{Cordero5} and Soleymani et al.\
\cite{Soleymani3}, Babajee et al. \cite{Babajee}, Thukral and
Petkovic \cite{Thukral} and for recent studies the interested
reader is referred to Chun and Lee \cite{Chun} and Petkovic et
al.\ \cite{Petkovic} and Neta \cite{Neta0} has demonstrated
methods of eight and sixteen order of convergence. Alberto et al.
\cite{Alberto} have analyzed a different anomalies in a Jarrat
family of iterative root-finding methods. In \cite{Chun1} Chun et
al. introduced weight functions with a parameter into an iteration
process to increase the order of the convergence and enhance the
behavior of the iteration process. In \cite{LS} Lotfi and Salimi
pointed to serious errors that presented in the paper entitled "A
family of optimal iterative methods with fifth and tenth order
convergence for solving nonlinear equations" as well.

The paper is organized as follows: Section 2 is devoted to the
construction and convergence analysis of a new two-point method
with optimal convergence order four and a new three-point method
with optimal convergence order eight. Computational aspects,
comparisons and dynamic behavior with other methods are
illustrated in Section 3.
\section{Development of multi-point methods}

\subsection{Optimal two-point method}
\label{sec:1}In this section we construct a new optimal two-point
class of iterative methods for solving nonlinear equations. The
Newton-Secant method is given by
\begin{equation}
  \begin{array}{lrl}\label{a1}
  y_n=x_n-\frac{f(x_n)}{f^{'}(x_n)},&\\
  [1ex]
x_{n+1}=x_{n}-\frac{f^2(x_{n})}{(f(x_n)-f(y_n))f^{'}(x_{n})},\quad
(n=0, 1, \ldots),
      \end{array}
\end{equation}
where $x_0$ is an initial approximation of $x^{*}$. The
convergence order of (\ref{a1}) is three and with three
evaluations it is not optimal. We intend to increase the order of
convergence and extend (\ref{a1}) by an additional step
\begin{equation}\label{a2}
\begin{array}{lrl}
y_n=x_n-\frac{f(x_n)}{f^{'}(x_n)},&
\\[1ex]
z_{n}=x_{n}-\frac{f^2(x_{n})}{(f(x_n)-f(y_n))f^{'}(x_{n})},&
\\[1ex]
x_{n+1}=z_n-\frac{f(z_n)}{f^{'}(x_n)}.
\end{array}
\end{equation}
Method (\ref{a2}) uses four function evaluations with order of
convergence four. Therefore, this method is not optimal. In order
to decrease the number of function evaluations, we approximate
$f(z_n)$ by an expression based on $f(x_n)$, $f(y_n)$ and
$f'(x_n)$. Taylor expansion of $f$ at $x_n$ yields
\begin{equation}\label{a3}
f(z_n)=
f(x_n)+f^{'}(x_n)(z_n-x_n)+\frac{1}{2}f^{''}(x_n)(z_n-x_n)^2+O\big((z_n-x_n)^3\big),
\end{equation}
and similarly we have
\begin{equation}\label{a4}
f(y_n)=
f(x_n)+f^{'}(x_n)(y_n-x_n)+\frac{1}{2}f^{''}(x_n)(y_n-x_n)^2+O\big((y_n-x_n)^3\big).
\end{equation}
Using Newton's method and (\ref{a4}), we obtain
\begin{equation}\label{a5}
\frac{1}{2}f^{''}(x_n)\approx
\frac{f(y_n)(f^{'}(x_n))^2}{f^2(x_n)}.
\end{equation}
According to (\ref{a2}), we have
\begin{equation}\label{a6}
z_n-x_n=-\frac{f^2(x_n)}{\left(f(x_n)-f(y_n)\right)f^{'}(x_n)}.
\end{equation}
Substituting (\ref{a5}) and (\ref{a6}) into (\ref{a3}), we obtain
\begin{equation}\label{a7}
f(z_n)\approx
f(x_n)-\frac{f^2(x_n)}{f(x_n)-f(y_n)}+\frac{f(y_n)f^2(x_n)}{\left(f(x_n)-f(y_n)\right)^2}.
\end{equation}
 Substituting (\ref{a7}) into (\ref{a2}), yields
\begin{equation}\label{a8}
\begin{array}{lrl}
y_n=x_n-\frac{f(x_n)}{f'(x_n)},&
\\[0.7ex]
z_{n}=x_{n}-\frac{f^2(x_{n})}{(f(x_n)-f(y_n))f^{'}(x_{n})},&
\\[0.7ex]
x_{n+1}=z_n-\left[
1-\frac{f(x_n)}{f(x_n)-f(y_n)}\left(1+\frac{f(y_n)}{f(x_n)-f(y_n)}\right)\right]\frac{f(x_n)}{f^{'}(x_n)}.
\end{array}
\end{equation}
Although we reduced the number of function evaluations compared to
(\ref{a2}), the convergence order of (\ref{a8}) is not yet four.
In order to increase it, we consider an appropriate weight
function, namely $\phi(t_n)$, as follows:
\begin{equation}\label{a9}
\begin{array}{lrl}
y_n=x_n-\frac{f(x_n)}{f'(x_n)},&
\\[1ex]
z_{n}=x_{n}-\frac{f^2(x_{n})}{(f(x_n)-f(y_n))f'(x_{n})},&
\\[1ex]
x_{n+1}=z_n-\left[
1-\frac{f(x_n)}{f(x_n)-f(y_n)}\left(1+\frac{f(y_n)}{f(x_n)-f(y_n)}\right)\right]\frac{f(x_n)}{f'(x_n)}\phi(t_n),
\end{array}
\end{equation}
where $t_n=\frac{f(y_n)}{f(x_n)}$. In the following theorem, we
provide sufficient conditions on the weight function $\phi(t_n)$
which imply that method (\ref{a9}) has convergence order four.

\begin{theorem}\label{theorem1}
Let $D \subseteq \R$ be an open interval, $f : D \rightarrow \R$
four times continuously differentiable and let $x^{*}\in D$ be a
simple zero of $f$. If the initial point $x_{0}$ is sufficiently
close to $x^{*}$, then the method defined by (\ref{a9}) converges
to $x^{*}$ with order at least four if the weight function $\phi :
\R \rightarrow \R$ is two times continuously differentiable and
satisfies the conditions
\[
\phi(0)=0 \quad, \quad \phi^{'}(0)=-\frac{1}{2} \quad \textup{and}
\quad |\phi^{''}(0)|<\infty.
\]
\end{theorem}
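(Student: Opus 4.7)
The plan is the standard Taylor-expansion-around-the-root technique, tracking the error $e_n = x_n - x^*$ and showing that the conditions on $\phi$ at $0$ are exactly what is needed to annihilate the coefficients of $e_n$, $e_n^2$ and $e_n^3$ in the final error equation, leaving $e_{n+1} = O(e_n^4)$.

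First I would introduce the abbreviations $c_k = f^{(k)}(x^*)/(k!\,f'(x^*))$ for $k \ge 2$ and write out the expansions
\[
f(x_n) = f'(x^*)\bigl[e_n + c_2 e_n^2 + c_3 e_n^3 + c_4 e_n^4 + O(e_n^5)\bigr],
\]
\[
f'(x_n) = f'(x^*)\bigl[1 + 2c_2 e_n + 3c_3 e_n^2 + 4c_4 e_n^3 + O(e_n^4)\bigr].
\]
A direct division and Newton substep yields $y_n - x^* = c_2 e_n^2 + 2(c_3-c_2^2)e_n^3 + O(e_n^4)$, which then expands via Taylor into $f(y_n) = f'(x^*)\bigl[c_2 e_n^2 + O(e_n^3)\bigr]$. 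Consequently
\[
t_n \;=\; \frac{f(y_n)}{f(x_n)} \;=\; c_2 e_n + O(e_n^2),
\]
so $t_n$ is itself of order $e_n$ and tends to $0$ with $e_n$; this justifies the Taylor expansion $\phi(t_n) = \phi(0) + \phi'(0)\,t_n + \tfrac{1}{2}\phi''(0)\,t_n^2 + O(t_n^3)$, where the finiteness of $\phi''(0)$ is used to keep the remainder bounded.

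Next I would simplify the correction bracket in (\ref{a9}). A short computation shows
\[
1 - \frac{f(x_n)}{f(x_n)-f(y_n)}\left(1 + \frac{f(y_n)}{f(x_n)-f(y_n)}\right) \;=\; 1 - \frac{1}{(1-t_n)^2} \;=\; -2t_n - 3t_n^2 - 4t_n^3 - \cdots,
\]
which is $O(e_n)$, and that the intermediate iterate satisfies the well-known cubic error estimate $z_n - x^* = c_2(c_2^2 - c_3)\,e_n^3 + O(e_n^4)$ coming from the Newton-Secant step (\ref{a1}). Expanding $f(z_n)$ by Taylor and dividing by $f'(x_n)$ then gives the analogous cubic leading term for $z_n - f(z_n)/f'(x_n) - x^*$; but I actually go straight back to (\ref{a9}) and substitute the combined expansion
\[
x_{n+1} - x^* \;=\; (z_n - x^*) \;-\; \Bigl(1 - (1-t_n)^{-2}\Bigr)\frac{f(x_n)}{f'(x_n)}\,\phi(t_n).
\]

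At this point the argument becomes bookkeeping. Using $\phi(t_n) = \phi(0) + \phi'(0)t_n + \tfrac12\phi''(0)t_n^2 + O(t_n^3)$ and the bracket expansion above, I collect terms in ascending powers of $e_n$. The condition $\phi(0)=0$ removes an otherwise dominant linear contribution of order $e_n$ in the correction; the condition $\phi'(0) = -\tfrac12$ cancels the residual $e_n^2$ and $e_n^3$ contributions (the factor $-\tfrac12$ being exactly what pairs the $-2t_n$ from the bracket with the $c_2 e_n$ leading term of $t_n$ so that the Newton-Secant tail $c_2(c_2^2-c_3)e_n^3$ is neutralised); and $|\phi''(0)| < \infty$ guarantees that the $t_n^2$ term of $\phi$ contributes at $O(e_n^4)$ rather than blowing up. The resulting error equation takes the form $e_{n+1} = K(c_2, c_3, c_4, \phi''(0))\,e_n^4 + O(e_n^5)$ for an explicit constant $K$, which proves fourth-order convergence.

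The main obstacle is purely computational: the expansion of $f(z_n)/f'(x_n)$ requires $z_n - x^*$ to order $e_n^3$, and hence $f(x_n)$, $f'(x_n)$, $f(y_n)$ must be carried through to order $e_n^4$; the resulting polynomial identities are lengthy and, in practice, best verified with a computer algebra system. The conceptual content is however entirely captured in the three matching conditions on $\phi$ at $0$.
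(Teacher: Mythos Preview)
Your approach is exactly the paper's: Taylor-expand around $x^*$, introduce $c_k=f^{(k)}(x^*)/(k!\,f'(x^*))$, expand $t_n$ and $\phi(t_n)$, and read off the conditions on $\phi(0),\phi'(0),\phi''(0)$ from the error equation. Two concrete slips in your bookkeeping, however, should be fixed before you claim a proof. First, the Newton--Secant substep~(\ref{a1}) has leading error $z_n-x^*=c_2^{2}\,e_n^{3}+O(e_n^{4})$, not $c_2(c_2^{2}-c_3)e_n^{3}$; the latter belongs to a different family. Second, your order accounting is off by one: since $1-(1-t_n)^{-2}=O(e_n)$ and $f(x_n)/f'(x_n)=e_n+O(e_n^2)$, the correction term is already $O(e_n^{2})\cdot\phi(t_n)$. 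Hence $\phi(0)=0$ kills the $e_n^{2}$ term (the paper's $R_2=2c_2\phi(0)$), not an $e_n$ term, and $\phi'(0)=-\tfrac12$ kills only the $e_n^{3}$ term via $R_3=c_2^{2}(1+2\phi'(0))$; it is precisely this coefficient that balances the correct tail $c_2^{2}e_n^{3}$ of $z_n-x^*$, which is why your wrong $z_n$ coefficient would not cancel. With these two corrections your sketch reproduces the paper's proof verbatim.
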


\begin{proof}
 Let $e_{n}:=x_{n}-x^{*}$, $e_{n,y}:=y_{n}-x^{*}$,
$e_{n,z}:=z_{n}-x^{*}$ and
$c_{n}:=\frac{f^{(n)}(x^{*})}{n!f^{'}(x^{*})}$ for $n\in \N$.
Using the fact that $f(x^{*})=0$, Taylor expansion of $f$ at
$x^{*}$ yields
\begin{equation}\label{a10}
f(x_{n}) = f^{'}(x^*)\left(e_{n} +
c_{2}e_{n}^{2}+c_{3}e_{n}^{3}+c_{4}e_{n}^{4}\right)+O(e_n^{5})
\end{equation}
and
\begin{equation}\label{a11}
f^{'}(x_{n}) = f^{'}(x^*)\left(1 +
2c_{2}e_{n}+3c_{3}e_{n}^{2}+4c_{4}e_{n}^{3}\right)+O(e_n^{4}).
\end{equation}
Therefore
\[
\frac{f(x_{n})}{f'(x_{n})}=e_{n}-c_{2}e_{n}^{2}+\left(2c_{2}^{2}-2c_{3}\right)e_{n}^{3}+O(e_n^{4}),
\]
and hence
\[
e_{n,y}= y_n-x^*=c_{2}e_{n}^{2}+ O(e_n^3).
\]
For $f(y_n)$ we also have
\begin{equation}\label{a12}
f(y_{n}) = f^{'}(x^*)\left(c_2e_{n}^2
+(-2c_2^2+2c_3)e_n^3+(5c_2^3-7c_2c_3+3c_4)e_n^4\right)+O(e_n^{5}),
\end{equation}
therefore, by substituting (\ref{a10}), (\ref{a11}) and
(\ref{a12}) into (\ref{a2}), we get
\[
e_{n,z}= z_n-x^*=c_{2}^2e_{n}^{3}+ O(e_n^4).
\]
From (\ref{a10}) and (\ref{a12}), we obtain
\begin{equation}\label{a13}
t_n=
\frac{f(y_n)}{f(x_n)}=c_2e_n+(-3c_2^2+2c_3)e_n^2+(8c_2^3-10c_2c_3+3c_4)e_n^3+O(e_n^{4}).
\end{equation}
Expanding $\phi$ at $0$, yields
\begin{equation}\label{a14}
\phi(t_n)=\phi(0)+\phi^{'}(0)t_n+\frac{1}{2}\phi^{''}(0)t_n^2+O(t_n^{3}).
\end{equation}
Substituting (\ref{a10})-(\ref{a14}) into (\ref{a9}), we obtain
\[
e_{n+1}=x_{n+1}-x^*=R_2e_n^2+R_3e_n^3+R_4e_n^4+O(e_n^5),
\]
where
\begin{equation}
\begin{array}{lrl}
R_2=2c_2 \phi(0),&
\\[1ex]
R_3=c_2^2\big(1+2\phi^{'}(0)\big),&
\\[1ex]
R_4=-c_2c_3+c_2^3\big(\frac{5}{2}+\phi^{''}(0)\big).
\end{array}
\end{equation}
By setting $R_2=R_3=0$, the convergence order becomes four.
Obviously
\begin{equation}
\begin{array}{lrl}
\phi(0)=0 \quad \Rightarrow \quad R_2=0,&
\\[1ex]
\phi^{'}(0)=-\frac{1}{2} \quad \Rightarrow \quad R_3=0,
\\[1ex]
|\phi^{''}(0)|<\infty \quad \Rightarrow \quad R_4\neq0.
\end{array}
\end{equation}
Consequently,  the error equation becomes
\[
e_{n+1}=R_{4}e_n^4+O(e_n^5),
\]
which finishes the proof of the theorem.
\end{proof}
\subsection{Optimal three-point method}
In this section we construct a new optimal three-point method
based on the two-point method (\ref{a9}). We extend method
(\ref{a9}) by a Newton step and get
\begin{equation}\label{a15}
\begin{array}{lrl}
y_n=x_n-\frac{f(x_n)}{f^{'}(x_n)},&
\\[0.7ex]
z_{n}=x_{n}-\frac{f^2(x_{n})}{(f(x_n)-f(y_n))f'(x_{n})},&
\\[0.7ex]
v_n=z_n-\left[
1-\frac{f(x_n)}{f(x_n)-f(y_n)}\left(1+\frac{f(y_n)}{f(x_n)-f(y_n)}\right)\right]\frac{f(x_n)}{f'(x_n)}\phi(t_n),\\[0.7ex]
x_{n+1}=v_n-\frac{f(v_n)}{f^{'}(v_n)},
\end{array}
\end{equation}
where $\phi(t_n)$ is a weight function as in Theorem
\ref{theorem1}.

Method (\ref{a15}) evaluates functions for five times with order
of convergence eight, so the method is not optimal. In order to
reduce the number of function evaluation, we approximate $f'(v_n)$
by an expression which is based on $f(x_n)$, $f(y_n)$, $f(v_n)$,
and $f'(x_n) $, namely its linear approximation
\begin{equation}\label{a16}
f^{'}(v_n)\approx
f^{'}(x_n)+\frac{f^{'}(z_n)-f^{'}(x_n)}{z_n-x_n}(v_n-x_n).
\end{equation}
We approximate $f'(z_n)$ by expressions which were calculated
above. The Taylor expansion of $f$ at $y_n$ yields
\begin{equation}\label{a17}
f(z_n)=
f(y_n)+f^{'}(y_n)(z_n-y_n)+\frac{1}{2}f^{''}(y_n)(z_n-y_n)^2+O\big((z_n-y_n)^3\big),
\end{equation}
and
\begin{equation}\label{a18}
f^{'}(z_n)=
f^{'}(y_n)+f^{''}(y_n)(z_n-y_n)+O\big((z_n-y_n)^2\big).
\end{equation}
According to (\ref{a17}), we have
\begin{equation}\label{a19}
f^{'}(y_n)\approx
\frac{f(z_n)-f(y_n)}{z_n-y_n}-\frac{1}{2}f^{''}(y_n)(z_n-y_n).
\end{equation}
On the other hand, we have
\begin{equation}\label{a20}
f^{''}(y_n)\approx
2f[z_n,x_n,x_n]=\frac{2\left(f[z_n,x_n]-f^{'}(x_n)\right)}{z_n-x_n},
\end{equation}
where $f[z_n, x_n]=\frac{f(z_n)-f(x_n)}{z_n-x_n}$. Substituting
(\ref{a19}) and (\ref{a20}) into (\ref{a18}), we obtain
\begin{equation}\label{a21}
f^{'}(z_n)\approx
f[z_n,y_n]+\big(f[z_n,x_n]-f^{'}(x_n)\big)\frac{z_n-y_n}{z_n-x_n},
\end{equation}
where $f[z_n, y_n]=\frac{f(z_n)-f(y_n)}{z_n-y_n}$. In a next step
we replace $f(z_n)$ by an approximation to reduce the number of
function evaluations. Taylor expansion of $f$ at $x_n$ yields
\begin{equation}\label{a22}
\begin{array}{lrl}
f(z_{n}) =
f(x_n)+f^{'}(x_n)(z_n-x_n)+\frac{1}{2}f^{''}(x_n)(z_n-x_n)^2+\frac{1}{6}f^{'''}(x_n)(z_n-x_n)^3
& \\[1ex]
\quad \quad \quad +O\big((z_n-x_n)^4\big),
\end{array}
\end{equation}
and similarly we have
\begin{equation}\label{a23}
\begin{array}{lrl}
f(v_{n}) =
f(x_n)+f^{'}(x_n)(v_n-x_n)+\frac{1}{2}f^{''}(x_n)(v_n-x_n)^2+\frac{1}{6}f^{'''}(x_n)(v_n-x_n)^3
& \\[1ex]
\quad \quad \quad +O\big((v_n-x_n)^4\big).
\end{array}
\end{equation}
From (\ref{a23}), we calculate
\begin{equation}\label{a24}
\frac{1}{6}f^{'''}(x_n) \approx
\left[\frac{f(v_n)-f(x_n)}{v_n-x_n}-f^{'}(x_n)-\frac{f(y_n)\left(f^{'}(x_n)\right)^2}{f^2(x_n)}(v_n-x_n)\right]\frac{1}{(v_n-x_n)^2}.
\end{equation}
Plugging (\ref{a5}) and (\ref{a24}) into (\ref{a22}), we obtain
\begin{equation}\label{a25}
\begin{array}{lrl}
f(z_{n}) \approx
f(x_n)+f^{'}(x_n)(z_n-x_n)+\frac{f(y_n)\left(f^{'}(x_n)\right)^2}{f^2(x_n)}(z_n-x_n)^2&
\\[1ex]
+\left[f[v_n,x_n]-f^{'}(x_n)-\frac{f(y_n)\left(f^{'}(x_n)\right)^2}{f^2(x_n)}(v_n-x_n)\right]\frac{(z_n-x_n)^3}{(v_n-x_n)^2}.
\end{array}
\end{equation}
Then, by replacing (\ref{a21}) into (\ref{a16}), we get
\begin{equation}\label{a226}
f^{'}(v_n)\approx
f^{'}(x_n)+\frac{f[z_n,y_n]+\left(f[z_n,x_n]-f^{'}(x_n)\right)\frac{z_n-y_n}{z_n-x_n}-f^{'}(x_n)}{z_n-x_n}(v_n-x_n),
\end{equation}
where we can plug (\ref{a25}) instead of $f(z_n)$ in (\ref{a226})
as well. The following scheme evaluates functions for four times
\begin{equation}\label{a26}
\begin{array}{lrl}
y_n=x_n-\frac{f(x_n)}{f^{'}(x_n)},&
\\[1ex]
z_{n}=x_{n}-\frac{f^2(x_{n})}{(f(x_n)-f(y_n))f'(x_{n})},&
\\[1ex]
v_n=z_n-\left[
1-\frac{f(x_n)}{f(x_n)-f(y_n)}\left(1+\frac{f(y_n)}{f(x_n)-f(y_n)}\right)\right]\frac{f(x_n)}{f'(x_n)}\phi(t_n),&
\\[1ex]
x_{n+1}=v_n-f(v_n)\left(f^{'}(x_n)+\frac{f[z_n,y_n]+\left(f[z_n,x_n]-f^{'}(x_n)\right)\frac{z_n-y_n}{z_n-x_n}-f^{'}(x_n)}{z_n-x_n}(v_n-x_n)\right)^{-1},
\end{array}
\end{equation}
where $f(z_n)$ is evaluated  from (\ref{a25}) and
$t_n=\frac{f(y_n)}{f(x_n)}$.

Method (\ref{a26}) is not still optimal. Therefore we introduce a
second weight function as follows:
\begin{equation}\label{a27}
\begin{array}{lrl}
y_n=x_n-\frac{f(x_n)}{f'(x_n)},&
\\[1ex]
z_{n}=x_{n}-\frac{f^2(x_{n})}{(f(x_n)-f(y_n))f'(x_{n})},&
\\[1ex]
v_n=z_n-\left[
1-\frac{f(x_n)}{f(x_n)-f(y_n)}\left(1+\frac{f(y_n)}{f(x_n)-f(y_n)}\right)\right]\frac{f(x_n)}{f'(x_n)}\phi(t_n),&
\\[1ex]
x_{n+1}=v_n-f(v_n)
& \\[1ex]
\quad \quad \quad
\left(f^{'}(x_n)+\frac{f[z_n,y_n]+\left(f[z_n,x_n]-f^{'}(x_n)\right)\frac{z_n-y_n}{z_n-x_n}-f^{'}(x_n)}{z_n-x_n}(v_n-x_n)\right)^{-1}\psi(s_n),
\end{array}
\end{equation}
where $f(z_n)$ is evaluated  from (\ref{a25}) and
$t_n=\frac{f(y_n)}{f(x_n)}$ and $s_n=\frac{f(v_n)}{f(x_n)}$.\\

In the following theorem we prove that method (\ref{a27}) is of
convergence order eight if the weight functions $\phi(t_n)$ and
$\psi(s_n)$ satisfy the stated conditions in the following
theorem.
\begin{theorem}\label{theorem2}
Let $D \subseteq \R$ be an open interval, $f : D \rightarrow \R$
eight times continuously differentiable and let $x^{*}\in D$ be a
simple zero of $f$. If the initial point $x_{0}$ is sufficiently
close to $x^{*}$, then the method defined by (\ref{a27}) converges
to $x^{*}$ with order at least eight if the weight function $\phi
: \R \rightarrow \R$ is two times continuously differentiable,
$\psi : \mathbb{R} \rightarrow \R$ is continuously differentiable
and they satisfy the conditions of Theorem \ref{theorem1} and
moreover
\[
\phi^{''}(0)=-\frac{5}{2}, \quad \psi(0)=1 \quad \textup{and}
\quad \psi^{'}(0)=1.
\]
\end{theorem}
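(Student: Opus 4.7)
The plan is to imitate the proof of Theorem~\ref{theorem1} but carry the Taylor bookkeeping all the way to eighth order in $e_n$ and incorporate the additional approximations (\ref{a25}) and (\ref{a226}). Introduce the usual notation $e_n = x_n - x^*$, $e_{n,y} = y_n - x^*$, $e_{n,z} = z_n - x^*$, $e_{n,v} = v_n - x^*$, and $c_k = f^{(k)}(x^*)/(k!\,f'(x^*))$ for $k \in \N$. The first step is to extend the Taylor expansions (\ref{a10})--(\ref{a12}) of $f(x_n)$, $f'(x_n)$, $f(y_n)$, and $f(v_n)$ about $x^*$ to remainders of order $O(e_n^9)$; these will be the raw ingredients of every subsequent expansion.

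Next I would recover, as in the proof of Theorem~\ref{theorem1}, the leading-order estimates $e_{n,y} = c_2 e_n^2 + O(e_n^3)$ and $e_{n,z} = c_2^2 e_n^3 + O(e_n^4)$, and then invoke that theorem together with the hypotheses $\phi(0) = 0$, $\phi'(0) = -1/2$ and $\phi''(0) = -5/2$. The choice $\phi''(0) = -5/2$ makes the coefficient $R_4$ from the proof of Theorem~\ref{theorem1} collapse to $-c_2 c_3$, so that $e_{n,v} = -c_2 c_3\, e_n^4 + O(e_n^5)$. Consequently $f(v_n) = O(e_n^4)$ and $s_n = f(v_n)/f(x_n) = O(e_n^3)$, which sets the scale for every remaining expansion.

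The core of the proof is then the analysis of the two approximations appearing in the last line of (\ref{a27}). Comparing (\ref{a25}) with the Taylor expansion of $f$ at $x_n$ shows that the surrogate $f(z_n)$ differs from the true value only by the error made in replacing $f'''(x_n)/6$ by the divided-difference expression in (\ref{a24}); a direct count gives a mismatch of order $(v_n - x_n)(z_n - x_n)^3 = O(e_n^4)$. Substituting this surrogate into (\ref{a226}) produces an approximation $\tilde f'(v_n)$ of $f'(v_n)$ whose relative error admits an explicit expansion in powers of $e_n$. Combining this with $\psi(s_n) = \psi(0) + \psi'(0)\, s_n + O(s_n^2) = \psi(0) + \psi'(0)\, s_n + O(e_n^6)$ and the previously computed expansions of $f(v_n)$ and $e_{n,v}$ yields an error formula
\[
e_{n+1} = S_4\, e_n^4 + S_5\, e_n^5 + S_6\, e_n^6 + S_7\, e_n^7 + S_8\, e_n^8 + O(e_n^9),
\]
in which $S_4,\ldots,S_7$ are explicit polynomials in $c_2, c_3, c_4, \psi(0)$ and $\psi'(0)$.

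The hypotheses of the theorem are then read off by demanding $S_4 = S_5 = S_6 = S_7 = 0$. The condition $\psi(0) = 1$ is forced by $S_4 = 0$, since the Newton-like outer correction must fully cancel the leading $O(e_n^4)$ part of $e_{n,v}$; the conditions $\phi'(0) = -1/2$ and $\phi''(0) = -5/2$ inherited from Theorem~\ref{theorem1} absorb $S_5$ and $S_6$; and $\psi'(0) = 1$ is the unique value that kills $S_7$. The resulting identity $e_{n+1} = S_8\, e_n^8 + O(e_n^9)$ then proves convergence of order at least eight. The main obstacle is sheer bookkeeping: one has to verify that the errors introduced by (\ref{a25}) and (\ref{a226}) propagate to the last step in a form that can be absorbed by the two free parameters $\psi(0)$ and $\psi'(0)$, a computation that in practice is most safely done with a computer algebra system such as \emph{Mathematica} or \emph{Maple}.
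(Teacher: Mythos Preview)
Your proposal is correct and follows essentially the same route as the paper: Taylor-expand $f(x_n)$, $f'(x_n)$, $f(y_n)$, $f(v_n)$, the surrogate $f(z_n)$ from (\ref{a25}), and the approximate $f'(v_n)$ from (\ref{a226}) in powers of $e_n$, substitute into (\ref{a27}), and read off the conditions on $\phi$ and $\psi$ that kill the coefficients of $e_n^4,\ldots,e_n^7$. The only noteworthy discrepancy is in the attribution of which conditions do what: $\phi''(0)=-\tfrac52$ is \emph{not} ``inherited from Theorem~\ref{theorem1}'' (that theorem only assumes $|\phi''(0)|<\infty$), and in the paper's computation the fifth- and sixth-order coefficients already vanish once $\phi(0)=0$, $\phi'(0)=-\tfrac12$ and $\psi(0)=1$ are imposed, while $\phi''(0)=-\tfrac52$ and $\psi'(0)=1$ are \emph{both} needed to annihilate the seventh-order term.
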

\begin{proof}
 Let $e_{n}:=x_{n}-x^{*}$, $e_{n,y}:=y_{n}-x^{*}$,
$e_{n,z}:=z_{n}-x^{*}$,
$c_{n}:=\frac{f^{(n)}(x^{*})}{n!f^{'}(x^{*})}$ for $n \in \N$.
Using the fact that $f(x^*)=0$, Taylor expansion of $f$ at $x^{*}$
yields
\begin{equation}\label{a28}
f(x_{n}) = f^{'}(x^*)(e_{n} +
c_{2}e_{n}^{2}+c_{3}e_{n}^{3}+c_{4}e_{n}^{4}+c_{5}e_{n}^{5}+c_{6}e_{n}^{6}+c_{7}e_{n}^{7}+c_8e_{n}^{8})+O(e_{n}^{9}),
\end{equation}
and
\begin{equation}\label{a29}
f^{'}(x_{n})=f^{'}(x^{*})(1+2c_{2}e_{n}+3c_{3}e_{n}^{2}+\dots+9c_{9}e_{n}^{8})+O(e_{n}^{9}).
\end{equation}
According to  Theorem \ref{theorem1}, we get
\[
e_{n,y}=y_n-x^{*}=c_2e_n^2+(-2c_2^2+2c_3)e_n^3+(4c_2^3-7c_2c_3+3c_4)e_n^4+O(e_n^5),
\]
and
\[
e_{n,v}=v_n-x^{*}=\big(
\big(\frac{5}{2}+\phi^{''}(0)\big)c_2^3-c_2c_3\big)e_n^4+O(e_n^5).
\]
By using Taylor's theorem for $f(y_n)$ and $f(v_n)$ at $x^{*}$, we
have
\begin{equation}\label{a30}
\begin{array}{lrl}
f(y_{n}) =
f^{'}(x^{*})\big[c_2e_n^2-2(c_2^2-c_3)e_n^3+(5c_2^3-7c_2c_3+3c_4)e_n^4
&
\\[1ex]
- 2(6c_2^4-12c_2^2c_3+3c_3^2+5c_2c_4-2c_5)e_n^5 &
\\[1ex]
+
\left(28c_2^5-73c_2^3c_3+37c_2c_3^2+34c_2^2c_4-17c_3c_4-13c_2c_5+5c_6
\right)e_n^6\big]+O(e_n^7),
\end{array}
\end{equation}
and
\begin{equation}\label{a31}
\begin{array}{lrl}
f(v_{n}) =
f^{'}(x^{*})\big[c_2c_3e_n^4+\frac{1}{4}\left(29c_2^4+8c_2^2c_3-8c_3^2\right)e_n^5
&
\\[1ex]
+\left(-60.75c_2^5+54c_2^3c_3+6c_2c_3^2+3c_2^2c_4-7c_3c_4-3c_2c_5\right)e_n^6
&
\\[1ex]
\frac{1}{4}(1243c_2^6-2166c_2^4c_3+332c_2^3c_4+8c_2^2(77c_3^2+2c_5)
&
\\[1ex]
+8(2c_3^3-3c_4^2-5c_3c_5)+16c_2(4c_3^2c_4-c_6))e_n^7\big]+O(e_n^8).
\end{array}
\end{equation}
Also
\begin{equation}\label{a32}
\begin{array}{lrl}
f(z_{n}) =
f^{'}(x^{*})\big[c_2^2e_n^3+3c_2(-c_2^2+c_3)e_n^4+(6c_2^4-13c_2^2c_3+2c_3^2+4c_2c_4)e_n^5 &
\\[1ex]
+(-8c_2^5+33c_2^3c_3-18c_2c_3^2-18c_2^2c_4+5c_3c_4+5c_2c_5)e_n^6 &
\\[1ex]
(3c_2^6-\frac{175}{4}c_2^4c_3+48c_2^3c_4+c_2^2(64c_3^2-23c_5) &
\\[1ex]
+(-8c_3^3+3c_4^2+6c_3c_5)+c_2(-50c_3c_4))e_n^7\big]+O(e_n^8).
\end{array}
\end{equation}
Moreover, for $f^{'}(v_n)$, we also have
\begin{equation}\label{a33}
\begin{array}{lrl}
f^{'}(v_{n}) =
f^{'}(x^{*})\big[1-c_2c_3e_n^3+\left(c_2^2c_3-2c_3^2-c_2c_4\right)e_n^4 &
\\[1ex]
+\frac{1}{4}\left(48c_2^5-197c_2^3c_3+104c_2c_3^2+60c_2^2c_4-20c_3c_4-4c_2c_5\right)e_n^5
&
\\[1ex]
-\frac{1}{2}(243c_2^6-412c_2^4c_3-16c_3^3+39c_2^3c_4+6c_4^2+12c_3c_5
&
\\[1ex]
+c_2^2(165c_3^2+4c_5)+c_2(-20c_3c_4+2c_6))e_n^6\big]+O(e_n^7).
\end{array}
\end{equation}
From (\ref{a28}) and (\ref{a31}), we calculate
\begin{equation}\label{a34}
\begin{array}{lrl}
s_n=\frac{f(v_n)}{f(x_n)}=\left(-c_2c_3\right)e_n^3+\left(7.25c_2^4+3c_2^2c_3-2c_3^2-2c_2c_4\right)e_n^4 &
\\[1ex]
+\left(-68c_2^5+51c_2^3c_3+5c_2^2c_4-7c_3c_4+9c_2c_3^2-3c_2c_5\right)e_n^5+O(e_n^6).
\end{array}
\end{equation}
Expanding $\psi$ at $0$, yields
\begin{equation}\label{a35}
\psi(s_n)=\psi(0)+\psi^{'}(0)s_n+\frac{1}{2}\psi^{''}(0)s_n^2+O(s_n^3).
\end{equation}
By substituting (\ref{a28})-(\ref{a35}) into (\ref{a27}), we
obtain
\[
e_{n+1}=x_{n+1}-x^{*}=R_4e_n^4+R_5e_n^5+R_6e_n^6+R_7e_n^7+R_8e_n^8+O(e_n^9),
\]
where
\begin{equation}
\begin{array}{lrl}
R_4=-\frac{1}{2}c_2(-1+\psi(0))\big(\big(5+2\phi^{''}(0)\big)c_2^2-2c_3\big),
&
\\[1ex]
R_5=0, &
\\[1ex]
R_6=0, &
\\[1ex]
R_7=-\frac{1}{4}c_2^2\big(-2c_3+c_2^2\big(5+2\phi^{''}(0)\big)\big)
& \\[1ex]
\quad \quad
\big(-2c_3\big(-1+\psi^{'}(0)\big)+c_2^2\big(5+2\phi^{''}(0)\big)\psi^{'}(0)\big),
&
\\[1ex]
R_8=\frac{1}{4}c_2^2c_3\left(29c_2^3+4c_2c_3-4c_4\right).
\end{array}
\end{equation}
To ensure convergence order eight for the three-point method
(\ref{a27}), it is necessary to have $R_i=0$, $(i=4,5,6,7)$.
Obviously
\begin{equation}
\begin{array}{lrl}
\psi(0)=1 \quad \Rightarrow \quad R_4=0, &
\\[1ex]
\psi^{'}(0)=1, \phi^{''}(0)=-\frac{5}{2}\quad \Rightarrow \quad
R_7&=0.
\end{array}
\end{equation}

It is clear that $R_{8}\neq 0$, thus the error equation becomes
\[
e_{n+1}=R_{8}e_n^8+O(e_n^9),
\]
and method (\ref{a27}) has convergence order eight, which proves
the theorem.
\end{proof}
In what follows, we give some concrete explicit representations of
(\ref{a27}) by choosing different weight functions satisfying the
provided condition for the weight functions $\phi(t_n)$ and
$\psi(t_n)$ in Theorems 1 and 2.

 \textbf{Method
1.} Choose the weight functions $\phi(t_n)$ and $\psi(s_n)$
 as follows:
\begin{equation}\label{a36}
\phi(t_n)=-\frac{1}{2}t_n-\frac{5}{4}t_n^2 \quad \textup{and}
\quad \psi(s_n)=\frac{1+2s_n}{1+s_n},
\end{equation}
where $t_n=\frac{f(y_n)}{f(x_n)}$ and $s_n=\frac{f(v_n)}{f(x_n)}$.
The functions  $\phi(t_n)$ and $\psi(s_n)$  in (\ref{a36}) satisfy
the assumptions of Theorem \ref{theorem2} denoted by SLSS, so
\begin{equation}\label{a111}
\begin{array}{lrl}
y_n=x_n-\frac{f(x_n)}{f^{'}(x_n)}, &
\\[1ex]
z_{n}=x_{n}-\frac{f^2(x_{n})}{(f(x_n)-f(y_n))f'(x_{n})}, &
\\[1ex]
v_n=z_n-\left[
1-\frac{f(x_n)}{f(x_n)-f(y_n)}\left(1+\frac{f(y_n)}{f(x_n)-f(y_n)}\right)\right]\left(\frac{-f(y_n)}{2f(x_n)}-\frac{5}{4}\left(\frac{f(y_n)}{f(x_n)}\right)^2\right)\frac{f(x_n)}{f'(x_n)}, &
\\[1ex]
x_{n+1}=v_n-f(v_n)\left(\frac{f(x_n)+2
f(v_n)}{f(x_n)+f(v_n)}\right)\times &
\\[1ex]
\quad \quad \quad
\left(f^{'}(x_n)+\frac{f[z_n,y_n]+\left(f[z_n,x_n]-f^{'}(x_n)\right)\frac{z_n-y_n}{z_n-x_n}-f^{'}(x_n)}{z_n-x_n}(v_n-x_n)\right)^{-1},
\end{array}
\end{equation}
where $f(z_n)$ is evaluated by (\ref{a25}).

\textbf{Method 2.} Choose the weight functions $\phi(t_n)$ and
$\psi(s_n)$  as follows:
\begin{equation}\label{a37}
\phi(t_n)=t_n+\frac{9t_n}{5t_n-6} \quad \textup{and} \quad
\psi(s_n)=\frac{1}{1-s_n},
\end{equation}
where $t_n=\frac{f(y_n)}{f(x_n)}$ and $s_n=\frac{f(v_n)}{f(x_n)}$.
The functions $\phi(t_n)$ and $\psi(s_n)$  in (\ref{a37}) satisfy
the assumptions of Theorem \ref{theorem2} and we get
\begin{equation}\label{a112}
\begin{array}{lrl}
y_n=x_n-\frac{f(x_n)}{f^{'}(x_n)}, &
\\[1ex]
z_{n}=x_{n}-\frac{f^2(x_{n})}{(f(x_n)-f(y_n))f'(x_{n})}, &
\\[1ex]
v_n=z_n-\left[
1-\frac{f(x_n)}{f(x_n)-f(y_n)}\left(1+\frac{f(y_n)}{f(x_n)-f(y_n)}\right)\right]\left(\frac{f(y_n)}{f(x_n)}+\frac{9f(y_n)}{5f(y_n)-6f(x_n)}\right)\frac{f(x_n)}{f'(x_n)}, &
\\[1ex]
x_{n+1}=v_n-f(v_n)\left(\frac{f(x_n)}{f(x_n)-f(v_n)}\right)\times
&
\\[1ex]
\quad \quad
\quad\left(f^{'}(x_n)+\frac{f[z_n,y_n]+\left(f[z_n,x_n]-f^{'}(x_n)\right)\frac{z_n-y_n}{z_n-x_n}-f^{'}(x_n)}{z_n-x_n}(v_n-x_n)\right)^{-1},
\end{array}
\end{equation}
where $f(z_n)$ is evaluated by (\ref{a25}).

\textbf{Method 3.} Choose the weight functions $\phi(t_n)$ and
$\psi(s_n)$  as follows:
\begin{equation}\label{a38}
\phi(t_n)=\frac{t_n}{5t_n-2} \quad \textup{and} \quad
\psi(s_n)=1+\frac{2s_n}{2+5s_n},
\end{equation}
where $t_n=\frac{f(y_n)}{f(x_n)}$ and $s_n=\frac{f(v_n)}{f(x_n)}$.
The functions $\phi(t_n)$ and $\psi(s_n)$ in (\ref{a38}) satisfy
the assumptions of Theorem \ref{theorem2} and we get
\begin{equation}\label{a113}
\begin{array}{lrl}
y_n=x_n-\frac{f(x_n)}{f^{'}(x_n)}, &
\\[1ex]
z_{n}=x_{n}-\frac{f^2(x_{n})}{(f(x_n)-f(y_n))f'(x_{n})}, &
\\[1ex]
v_n=z_n-\left[
1-\frac{f(x_n)}{f(x_n)-f(y_n)}\left(1+\frac{f(y_n)}{f(x_n)-f(y_n)}\right)\right]\left(\frac{f(y_n)}{5f(y_n)-2f(x_n)}\right)\frac{f(x_n)}{f'(x_n)}, &
\\[1ex]
x_{n+1}=v_n-f(v_n)\left(1+\frac{2f(v_n)}{2f(x_n)+5f(v_n)}\right)\times
&
\\[1ex]
\quad \quad
\quad\left(f^{'}(x_n)+\frac{f[z_n,y_n]+\left(f[z_n,x_n]-f^{'}(x_n)\right)\frac{z_n-y_n}{z_n-x_n}-f^{'}(x_n)}{z_n-x_n}(v_n-x_n)\right)^{-1},
\end{array}
\end{equation}
where $f(z_n)$ is evaluated by (\ref{a25}).

\textbf{Method 4.} Choose the weight functions $\phi(t_n)$ and
$\psi(s_n)$  as follows:
\begin{equation}\label{a39}
\phi(t_n)=-\frac{6t_n+t_n^2}{4}+\frac{t_n}{1+t_n} \quad
\textup{and} \quad \psi(s_n)=(1+s_n)^{\frac{s_n+1}{2s_n+1}},
\end{equation}
where $t_n=\frac{f(y_n)}{f(x_n)}$ and $s_n=\frac{f(v_n)}{f(x_n)}$.
The functions $\phi(t_n)$ and $\psi(s_n)$  in (\ref{a39}) satisfy
the assumptions of Theorem \ref{theorem2} and we get
\begin{equation}\label{a114}
\begin{array}{lrl}
y_n=x_n-\frac{f(x_n)}{f^{'}(x_n)}, &
\\[1ex]
z_{n}=x_{n}-\frac{f^2(x_{n})}{(f(x_n)-f(y_n))f'(x_{n})}, &
\\[1ex]
v_n=z_n-\left[
1-\frac{f(x_n)}{f(x_n)-f(y_n)}\left(1+\frac{f(y_n)}{f(x_n)-f(y_n)}\right)\right]\times &
\\[1ex]
\quad \quad
\left(\frac{-6f(y_n)}{4f(x_n)}-\frac{f^2(y_n)}{4f^2(x_n)}+\frac{f(y_n)}{f(x_n)+f(y_n)}\right)\frac{f(x_n)}{f'(x_n)},
&
\\[1ex]
x_{n+1}=v_n-f(v_n)\left(\left(1+\frac{f(v_n)}{f(x_n)}\right)^{\frac{f(v_n)+f(x_n)}{2f(v_n)+f(x_n)}}\right)\times\\
\quad \quad \quad
\left(f^{'}(x_n)+\frac{f[z_n,y_n]+\left(f[z_n,x_n]-f^{'}(x_n)\right)\frac{z_n-y_n}{z_n-x_n}-f^{'}(x_n)}{z_n-x_n}(v_n-x_n)\right)^{-1},
\end{array}
\end{equation}
where $f(z_n)$ is evaluated by (\ref{a25}).

In the next section we apply the new methods (\ref{a111}),
(\ref{a112}), (\ref{a113}) and (\ref{a114}) to several benchmark
examples and compare them with existing three-point methods which
have the same order of convergence and the same computational
efficiency index equal to $\sqrt[\theta]{r} = 1.682$ for the
convergence order $r=8$ which is optimal for $\theta = 4$ function
evaluations per iteration \cite{Ostrowski,Traub}.
\\
\section{Numerical performance and dynamic behavior}
\subsection{Numerical results }
In this section we test and compare our proposed methods with some existing methods.
 We compare our Methods 1-4 with the following related three-point methods.
\\
\\
\textbf{W. Bi, H. Ren and Q. Wu method.} The method by Bi
et al.\ \cite{Bi1} denoted by BRW is
\begin{equation}\label{b3}
\begin{array}{lrl}
y_{n}=x_{n}-\frac{f(x_{n})}{f'(x_{n})}, &
\\[1ex]
z_{n}=y_{n}-\frac{f(y_{n})}{f'(x_{n})}\cdot\frac{f(x_n)+\beta f(y_n)}{f(x_n)+(\beta-2)f(y_n)} , &
\\[1ex]
x_{n+1}=z_{n}- \frac{f(z_n)}{f[z_n, y_n]+f[z_n, x_n,
x_n](z_n-y_n)}H(t_n),
\end{array}
\end{equation}
with weight function
\begin{equation}\label{b4}
H(t_n)=\frac{1}{(1-\alpha t_n)^2}, \quad \alpha=1,
\end{equation}
and $t_n=\frac{f(z_n)}{f(x_n)}$ and $\beta=-\frac{1}{2}$.
\\
\\
\\
\textbf{Wang and  Liu method.} The method by Wang and  Liu
\cite{Liu3} denoted by WL is
\begin{equation}\label{b5}
\begin{array}{lrl}
y_n=x_n-\frac{f(x_n)}{f'(x_n)}, &
\\[1ex]
z_n=x_n-\frac{f(x_n)}{f'(x_n)}~G(t_n), &
\\[1ex]
x_{n+1}=z_n-\frac{f(z_n)}{f'(x_n)}\left(H(t_n)+V(t_n)W(s_n)\right),
\end{array}
\end{equation}
with weight functions
\begin{equation}\label{b6}
G(t_n)=\frac{1-t_n}{1-2t_n}, \quad
H(t_n)=\frac{5-2t_n+t_n^2}{5-12t_n}, \quad V(t_n)=1+4 t_n, \quad
W(s_n)=s_n,
\end{equation}
and $t_n=\frac{f(y_n)}{f(x_n)}$ and $s_n=\frac{f(z_n)}{f(y_n)}$.
\\
\\
\textbf{Sharma and Sharma method.} The Sharma and  Sharma method
\cite{Sharma1} denoted by SS is
\begin{equation}\label{b7}
\begin{array}{lrl}
y_n=x_n-\frac{f(x_n)}{f'(x_n)}, &
  \\[1ex]
   z_n=y_n-\frac{f(y_n)}{f'(x_n)}\cdot\frac{f(x_n)}{f(x_n)-2f(y_n)},
&
  \\[1ex]
x_{n+1}=z_n-\frac{f[x_n,y_n]f(z_n)}{f[x_n,z_n]f[y_n,z_n]}~W(t_n),
\end{array}
\end{equation}
where weight functions are
\begin{equation}\label{b8}
W(t_n)=1+\frac{t_n}{1+\alpha t_n}, \quad \alpha=1,
\end{equation}
and $t_n=\frac{f(z_n)}{f(x_n)}$.
\\
\\
\textbf{Babajee et al. method.} The method by Babajee et al.,  see
\cite{Babajee}, denoted by BCST,  is
\begin{equation}\label{b9}
  \begin{array}{lrl}
    y_n=x_{n}-\frac{f(x_{n})}{f'(x_{n})} \cdot\left(1+\left(\frac{f(x_n)}{f'(x_n)}\right)^5\right), &
  \\[1ex]
    z_n=y_{n}-\frac{f(y_n)}{f'(x_n)}\cdot\left(1-\frac{f(y_n)}{f(x_n)}\right)^{-2} ,&
  \\[1ex]
  x_{n+1}=z_{n}-
    \frac{f(z_n)}{f'(x_n)}\cdot\frac{1+\left(\frac{f(y_n)}{f(x_n)}\right)^2+5\left(\frac{f(y_n)}{f(x_n)}\right)^4+\frac{f(z_n)}{f(y_n)}}{\left(1-\frac{f(y_n)}{f(x_n)}-\frac{f(z_n)}{f(x_n)}\right)^2}.&
      \end{array}
\end{equation}
\textbf{Cordero et al. method.} The method by Cordero et al., see
\cite{Cordero5}, denoted by CFGT,  is

\begin{equation}
  \begin{array}{lrl}\label{b10}
    y_n=x_{n}-\frac{f(x_{n})}{f'(x_{n})}, &
  \\[1ex]
    z_n=y_{n}-\frac{f^3(x_n)}{f^3(x_n)-2f^2(x_n)f(y_n)-f(x_n)f^2(y_n)-\frac{1}{2}f^3(y_n)}
   \cdot\frac{f(y_n)}{f'(x_n)},&
  \\[1ex]
  x_{n+1}=z_{n}-\frac{f(x_n)+3f(z_n)}{f(x_n)+f(z_n)}\cdot\frac{f(z_n)}{f[z_n,y_n]+f[z_n,x_n,x_n](z_n-y_n)},&
      \end{array}
\end{equation}
\\
with the divided differences $f[z_n,
y_n]=\frac{f(z_n)-f(y_n)}{z_n-y_n}$, $f[z_n, x_n,
x_n]=\frac{f[z_n, x_n]-f^{'}(x_n)}{z_n-x_n}$. \\
\\
\\
\textbf{Cordero et al. method.} The method by Cordero et al., see
\cite{Cordero22}, denoted by CTV,  is

\begin{equation}
  \begin{array}{lrl}\label{b11}
    y_n=x_{n}-\frac{f(x_{n})}{f'(x_{n})}, &
  \\[1ex]
    z_n=y_{n}-\frac{f(y_n)}{f'(x_n)}\cdot\frac{f(x_n)}{f(x_n)-2f(y_n)},&
  \\[1ex]
  x_{n+1}=v_{n}-\frac{f(z_n)}{f'(x_n)}\cdot\frac{\gamma(v_n-z_n)}{\beta_1(v_n-z_n)+\beta_2(y_n-x_n)+\beta_3(z_n-x_n)},&
      \end{array}
\end{equation}
\\
where
\[
v_n=z_n-\frac{f(z_n)}{f'(x_n)}\cdot\left(\frac{f(x_n)-f(y_n)}{f(x_n)-2f(y_n)}+\frac{1}{2}\frac{f(z_n)}{f(y_n)-2f(z_n)}\right)^2,
\]
and $\gamma, \beta_1, \beta_2, \beta_3\in \mathbb{R}$ such that
$\gamma=3(\beta_2+\beta_3)\neq 0$.\\
\\
\\
\textbf{Thukral and Petkovic method.} The method by Thukral and
Petkovic., see \cite{Thukral}, denoted by TP,  is

\begin{equation}\label{b12}
  \begin{array}{lrl}
    y_n=x_{n}-\frac{f(x_{n})}{f'(x_{n})}, &
  \\[1ex]
    z_n=y_{n}-\frac{f(y_n)}{f'(x_n)}\cdot\frac{f(x_n)+\beta f(y_n)}{f(x_n)+(\beta-2)f(y_n)},\quad \quad (\alpha,\beta\in \R)&
  \\[1ex]
  x_{n+1}=z_{n}-\frac{f(z_n)}{f'(x_n)}\cdot\left(H(t_n)+\frac{f(z_n)}{f(y_n)-\alpha f(z_n)}+\frac{4f(z_n)}{f(x_n)}\right),&
      \end{array}
\end{equation}
\\
with weight functions
\begin{equation}\label{b13}
H(t_n)=\frac{5-2\beta
-(2-8\beta+2\beta^2)t_n+(1+4\beta)t_n^2}{5-2\beta-(12-12\beta+2\beta^2)t_n},
\end{equation}
where $t_n=\frac{f(y_n)}{f(x_n)}$.\\
\\
\\
\textbf{Chun and Lee method.} The method by Chun and Lee., see
\cite{Chun}, denoted by CL,  is

\begin{equation}
  \begin{array}{lrl}\label{b14}
    y_n=x_{n}-\frac{f(x_{n})}{f'(x_{n})}, &
  \\[1ex]
    z_n=y_{n}-\frac{f(y_n)}{f'(x_n)}\cdot\frac{1}{\left(1-\frac{f(y_n)}{f(x_n)}\right)^2},&
  \\[1ex]
  x_{n+1}=z_{n}-\frac{f(z_n)}{f'(x_n)}\cdot\frac{1}{\left(1-H(t_n)-J(s_n)-P(u_n)\right)^2},&
      \end{array}
\end{equation}
\\
with weight functions
\begin{equation}\label{b15}
H(t_n)=-\beta-\gamma+t_n+\frac{t_n^2}{2}-\frac{t_n^3}{2}, \quad
J(s_n)=\beta+\frac{s_n}{2}, \quad P(u_n)=\gamma+\frac{u_n}{2},
\end{equation}
where $t_n=\frac{f(y_n)}{f(x_n)}$, $s_n=\frac{f(z_n)}{f(x_n)}$, $u_n=\frac{f(z_n)}{f(y_n)}$ and $\beta, \gamma \in\R$.\\
\\
\\
The three-point method (\ref{a27}), more precisely, the explicitly
proposed methods (\ref{a111}), (\ref{a112}), (\ref{a113}) and
(\ref{a114}), are now tested on a number of nonlinear equations.
To obtain a high accuracy and avoid the loss of significant
digits, we employed multi-precision arithmetic with 1800
significant decimal digits in the programming package of
Mathematica 8. In order to compare them with the methods
(\ref{b3}), (\ref{b5}), (\ref{b7}), (\ref{b9}), (\ref{b10}),
(\ref{b11}), (\ref{b12}) and (\ref{b14}) we choose the initial
value $x_0$ using the \texttt{Mathematica} command
\texttt{FindRoot} \cite[pp.\ 158--160]{Hazrat} and compute the
error, the computational order of convergence, (COC) by the
approximate formula \cite{coc}
\[
\textup{COC}\approx\frac{\ln|(x_{n+1}-x^{*})/(x_{n}-x^{*})|}{\ln|(x_{n}-x^{*})/(x_{n-1}-x^{*})|}.
\]\\
and the approximated computational order of convergence, (ACOC) by
the formula \cite{acoc}
\[
\textup{ACOC}\approx\frac{\ln|(x_{n+1}-x_{n})/(x_{n}-x_{n-1})|}{\ln|(x_{n}-x_{n-1})/(x_{n-1}-x_{n-2})|}.
\]\\
It is worth noting although the former formula, COC, has been used
in the recent years, nevertheless, the later, ACOC, is more
practical. Here we have collect and use both of them for checking
the accuracy of the considered methods. Moreover, we should note
that the results for these formula are generally different from
the exact convergence order of the method. The reason is that in
the error equations of the methods, we have some coefficients that
depend on $c_k$, and these $c_k$s may vanish or vary for different
kinds of examples. See the out puts in the Tables 1 and 2. We
should be careful about these events. Indeed, it does not
contradicts our discussed theory since all of the formulas are
provided approximately and behave asymptotically.

\begin{center}\textbf{Table 1:} \\
$f(x)=\sin(x)-\frac{x}{100}, x^{*}=0, x_0=0.1$\\
\begin{tabular}{ c c c c c c c} \hline
Methods  & $~~~~\vert x_{1} -x^{*} \vert~~~~$ & $~~~~\vert  x_{2}
- x^{*} \vert~~~~$ & $~~~~\vert
x_{3} - x^{*} \vert~~~~$ & COC & ACOC\\
\hline
$(\ref{a111})$ & $~~0.949e-14$ &$0.486e-157~~$ & $0.314e-1733$ & $11.0000$ & $11.0000$\\
$(\ref{a112})$ & $~~0.929e-14$ &$0.387e-157~~$ & $0.252e-1734$& $11.0000$ & $11.0000$ \\
$(\ref{a113})$ & $~~0.877e-14$ &$0.204e-157~~$ & $0.223e-1737$& $11.0000$ & $11.0000$\\
$(\ref{a114})$ & $~~0.971e-14$ &$0.629e-157~~$ & $0.531e-1732$& $11.0000$ & $11.0000$\\
\hline
\end{tabular}\end{center}
\begin{center}\textbf{Table 2:} \\
$f(x)=\tan^{-1}(x), x^{*}=0, x_0=0.1$\\
\begin{tabular}{ c c c c c c c } \hline
Methods  & $~~~~\vert x_{1} -x^{*} \vert~~~~$ & $~~~~\vert  x_{2}
- x^{*} \vert~~~~$ & $~~~~\vert
x_{3} - x^{*} \vert~~~~$ &  COC & ACOC\\
\hline
$(\ref{a111})$  & $~~0.769e-12$ &$0.424e-134~~$ & $0.610e-1479$ & $11.0000$ & $11.0000$\\
$(\ref{a112})$  & $~~0.758e-12$ &$0.361e-134~~$ & $0.103e-1479$& $11.0000$ & $11.0000$ \\
$(\ref{a113})$  & $~~0.728e-12$ &$0.232e-134~~$ & $0.819e-1482$& $11.0000$ & $10.9999$ \\
$(\ref{a114})$  & $~~0.782e-12$ &$0.509e-134~~$ & $0.455e-1478$& $11.0000$ & $11.0000$\\

\hline
\end{tabular}\end{center}

In Table 1 and 2 our new three-point methods (\ref{a111}),
(\ref{a112}), (\ref{a113}) and (\ref{a114}) with weight functions
(\ref{a36}),  (\ref{a37}), (\ref{a38}) and (\ref{a39}) are tested
on two nonlinear equations.
\newpage
\begin{center}\textbf{Table 3:} \\
$f(x)=e^{\sin(x)}-1-\frac{x}{5}, x^{*}=0, x_0=0.1$\\
\begin{tabular}{ c c c c c c} \hline
Methods & $~~~~\vert x_{1} -x^{*} \vert~~~~$ & $~~~~\vert  x_{2} -
x^{*} \vert~~~~$ & $~~~~\vert
x_{3} - x^{*} \vert~~~~$ & COC & ACOC\\
\hline
$(\ref{a111})$ & $~~0.551e-9$ &$0.735e-83~~$ & $0.982e-748$ & $9.0000$ & $9.0000$\\
$(\ref{a112})$ & $~~0.346e-9$ &$0.628e-85~~$ & $0.132e-766$& $9.0000$ & $9.0000$\\
$(\ref{a113})$ & $~~0.543e-10$ &$0.362e-93~~$ & $0.943e-842$& $9.0000$ & $9.0000$\\
$(\ref{a114})$ & $~~0.768e-9$ &$0.226e-81~~$ & $0.373e-734$& $9.0000$ & $9.0000$\\
$(\ref{b3})$ & $~~0.123e-9$ &$0.162e-89~~$ & $0.181e-808$& $9.0000$ & $9.0000$\\
$(\ref{b5})$ & $~~0.266e-8$ &$0.108e-68~~$ & $0.831e-552$&
$8.0000$ & $7.9999$
\\
$(\ref{b7})$ & $~~0.589e-9$ &$0.128e-74~~$ & $0.673e-600$& $8.0000$ & $7.9999$\\
$(\ref{b9})$ &$~~0.672e-9$ &$0.199e-74~~$ & $0.119e-598$& $8.0000$ & $7.9999$\\
$(\ref{b10})$ & $~~0.125e-9$ &$0.175e-89~~$ & $0.380e-808$& $9.0000$ & $9.0000$ \\
$(\ref{b11})$ &$~~0.815e-9$ &$0.263e-73~~$ & $0.315e-598$& $8.0000$ & $7.9999$\\
$(\ref{b12})$ & $~~0.109e-7$ &$0.524e-63~~$ & $0.140e-505$& $8.0000$ & $7.9999$\\
$(\ref{b14})$ & $~~0.542e-9$ &$0.133e-74~~$ & $0.178e-599$& $8.0000$ & $8.0000$\\
\hline
\end{tabular}\end{center}

\begin{center}\textbf{Table 4:} \\
$f(x)=\ln(1-x+x^2)+4\sin(1-x), x^{*}=1, x_0=1.1$\\
\begin{tabular}{ c c c c c c} \hline
 Methods & $~~~~\vert x_{1} -x^{*}
\vert~~~~$ & $~~~~\vert  x_{2} - x^{*} \vert~~~~$ & $~~~~\vert
x_{3} - x^{*} \vert~~~~$ & COC & ACOC\\
\hline
$(\ref{a111})$  & $~~0.225e-12$ &$0.274e-117~~$ & $0.162e-1061$ & $9.0000$ & $8.9999$\\
$(\ref{a112})$ & $~~0.300e-12$ &$0.473e-116~~$ & $0.284e-1051$& $9.0000$ & $8.9999$ \\
$(\ref{a113})$ & $~~0.469e-12$ &$0.395e-114~~$ & $0.845e-1033$& $9.0000$ & $8.9999$\\
$(\ref{a114})$ & $~~0.159e-12$ &$0.966e-119~~$ & $0.577e-1075$& $9.0000$ & $8.9999$\\
$(\ref{b3})$ & $~~0.423e-12$ &$0.134e-114~~$ & $0.445e-1037$& $9.0000$ & $8.9999$\\
$(\ref{b5})$ & $~~0.295e-11$ &$0.629e-96~~$ & $0.265e-773$& $8.0000$ & $7.9999$\\
$(\ref{b7})$ & $~~0.172e-11$ &$0.581e-98~~$ & $0.984e-760$& $8.0000$ & $7.9999$\\
$(\ref{b9})$ & $~~0.357e-11$ &$0.343e-95~~$ & $0.251e-797$& $8.0000$ & $7.9999$\\
$(\ref{b10})$ & $~~0.423e-12$ &$0.135e-114~~$ & $0.474e-1037$& $9.0000$ & $8.9999$ \\
$(\ref{b11})$ & $~~0.179e-11$ &$0.839e-98~~$ & $0.195e-788$& $8.0000$ & $7.9999$\\
$(\ref{b12})$ & $~~0.829e-11$ &$0.977e-92~~$ & $0.362e-739$& $8.0000$ & $7.9999$\\
$(\ref{b14})$ &
 $~~0.211e-11$ &$0.494e-97~~$ & $0.250e-782$& $8.0000$ & $7.9999$\\
\hline
\end{tabular}\end{center}
In Tables 3 and 4 we compare our new method with the methods
(\ref{b3}), (\ref{b5}), (\ref{b7}), (\ref{b9}), (\ref{b10}),
(\ref{b11}), (\ref{b12}) and (\ref{b14}).
\newpage
\subsection{Dynamic behavior }

We already observed that all methods converge if the initial guess
is chosen suitably. We now investigate the stability region. In
other words, we numerically approximate the domain of attraction
of the zeros as a qualitative measure of stability. To answer the
important question on the dynamical behavior of the algorithms, we
investigate the dynamics of the new methods and compare them with
common and well-perfoming methods from the literature. It turns
out that only one method, namely CFGT, has better stability than
ours. In the following we recall some basic concepts such as basin
of attraction. For more details one can consult
\cite{Amat1}-\cite{Amat5}, \cite{Neta1}-\cite{Neta2},
\cite{Scott,Soleymani2,Stewart,Vrscay}.

Let $G:\C \to \C $ be a rational map on the complex plane. For
$z\in \C $, we define its orbit as the set
$orb(z)=\{z,\,G(z),\,G^2(z),\dots\}$. A point $z_0 \in \C $ is
called periodic point with minimal period $m$ if $G^m(z_0)=z_0$,
where $m$ is the smallest integer with this property. A periodic
point with minimal period $1$ is called fixed point. Moreover, a
point $z_0$ is called attracting if $|G'(z_0)|<1$, repelling if
$|G'(z_0)|>1$, and neutral otherwise. The Julia set of a nonlinear
map $G(z)$, denoted by $J(G)$, is the closure of the set of its
repelling periodic points. The complement of $J(G)$ is the Fatou
set $F(G)$, where the basin of attraction of the different roots
lie \cite{Babajee}, \cite{Cordero5}.

For the dynamical point of view, in fact, we take a $256 \times
256$ grid of the square $[-3,3]\times[-3,3]\in \C$ and assign a
color to each point $z_0\in D$ according to the simple root to
which the corresponding orbit of the iterative method starting
from $z_0$ converges, and we mark the point as black if the orbit
does not converge to a root, in the sense that after at most 100
iterations it has a distance to any of the roots, which is larger
than $10^{-3}$. In this way, we distinguish the attraction basins
by their color for different methods.

We have tested several different examples, and the results on the
performance of the tested methods were similar. Therefore we
merely report the general observation here for $f(z)=z^3-1/z$. A
visual inspection of the simulations indicates that for some
examples the SLSS method (see Fig. \ref{fig:figure1}) seems to
produce a larger basin of attraction than the BCST, SS, CTV, TP,
CL, BRW, WL methods (see Figs. \ref{fig:figure2}-\ref{fig:figure5}
and Figs. \ref{fig:figure7}-\ref{fig:figure9}), but it seems to be
smaller than that of the CFGT method (see Fig. \ref{fig:figure6}).
We stop here for a moment. Although we were able to ignore the
method CFGT, however, we should note that it is a very good
example to discuss some aspects of our algorithms. It is
well-known that any good algorithm should study these three
concepts: accuracy, efficiency, and stability. All the work in
this study have the same efficiency, four functional evaluations
per iterate. On the other hand, comparing CFGT and method (44)
reveal another fact: while a method may have a slightly better
accuracy, see Table 4 and compare numerical results for methods
(44) and (56), the other method may have produce a little better
stability. Therefore, we cannot conclude which one is better in
action. One has better accuracy, and the other has better
stability. On the whole, finding such examples could make deeper
understanding of  devising new algorithms and it can be left for
future works. Note that some points belong to no basin of
attraction; these are starting points for which the methods do not
converge, denoted by black points. These exceptional points
constitute the Julia set of methods, so named in honor of G.
Julia, a French mathematician who published an important memoir on
this subject in 1918. Here, we would like to tell a little more
about these black points. We have said that these point do not
converge to the roots. This statement is true only for the given
number of iterations, say 100 here. If we increase the number of
iteration, they might converge to a root, and the basins or Fatou
set might be larger.

\newpage
Test problem $f(z)=z^3-\frac{1}{z}$
\\
\begin{figure}[ht]
\begin{minipage}[b]{0.25\linewidth}
\centering
\includegraphics[width=\textwidth]{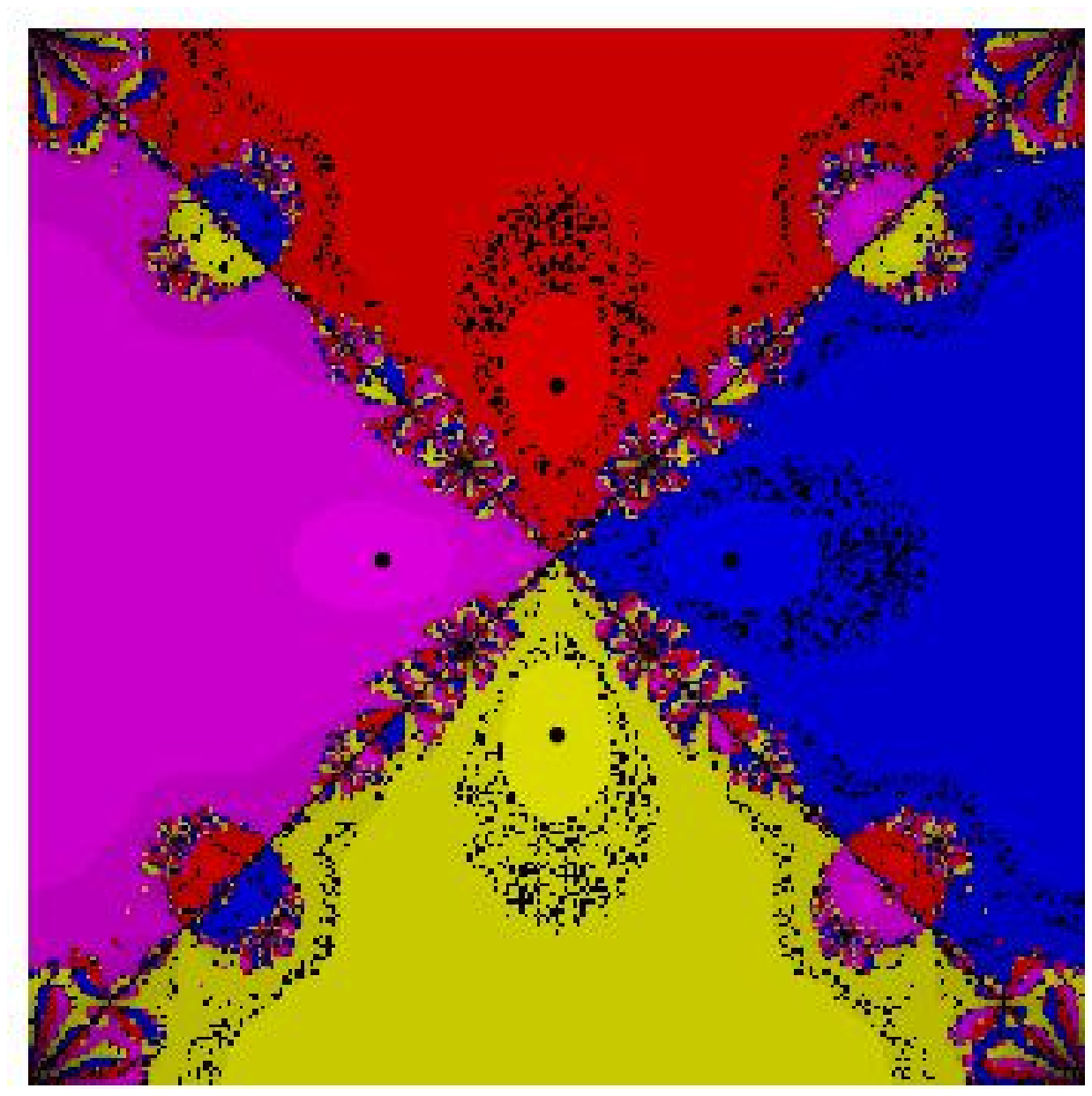}
\caption{SLSS} \label{fig:figure1}
\end{minipage}
\hspace{0.9cm}
\begin{minipage}[b]{0.25\linewidth}
\centering
\includegraphics[width=\textwidth]{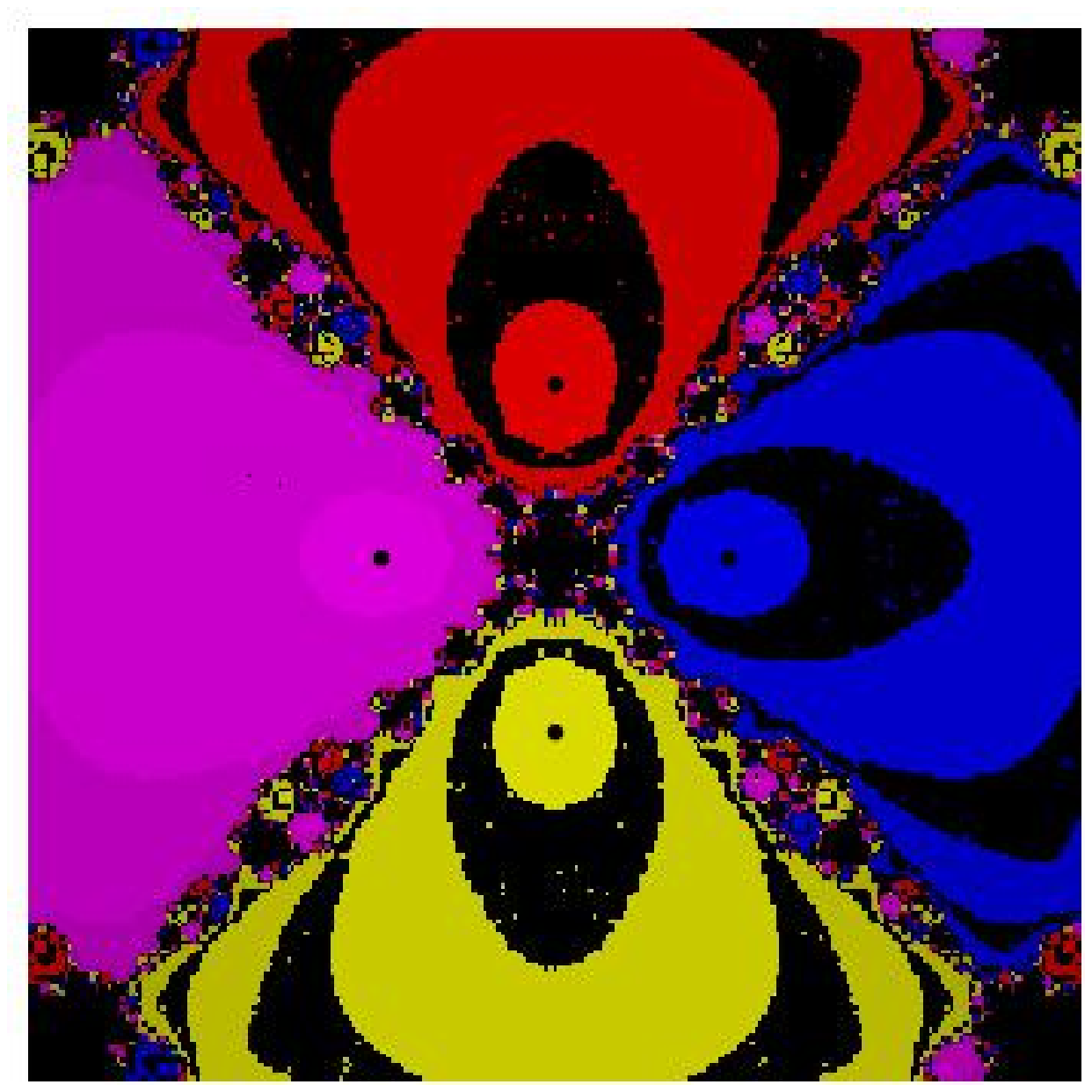}
\caption{BRW} \label{fig:figure2}
\end{minipage}
\hspace{0.9cm}
\begin{minipage}[b]{0.25\linewidth}
\centering
\includegraphics[width=\textwidth]{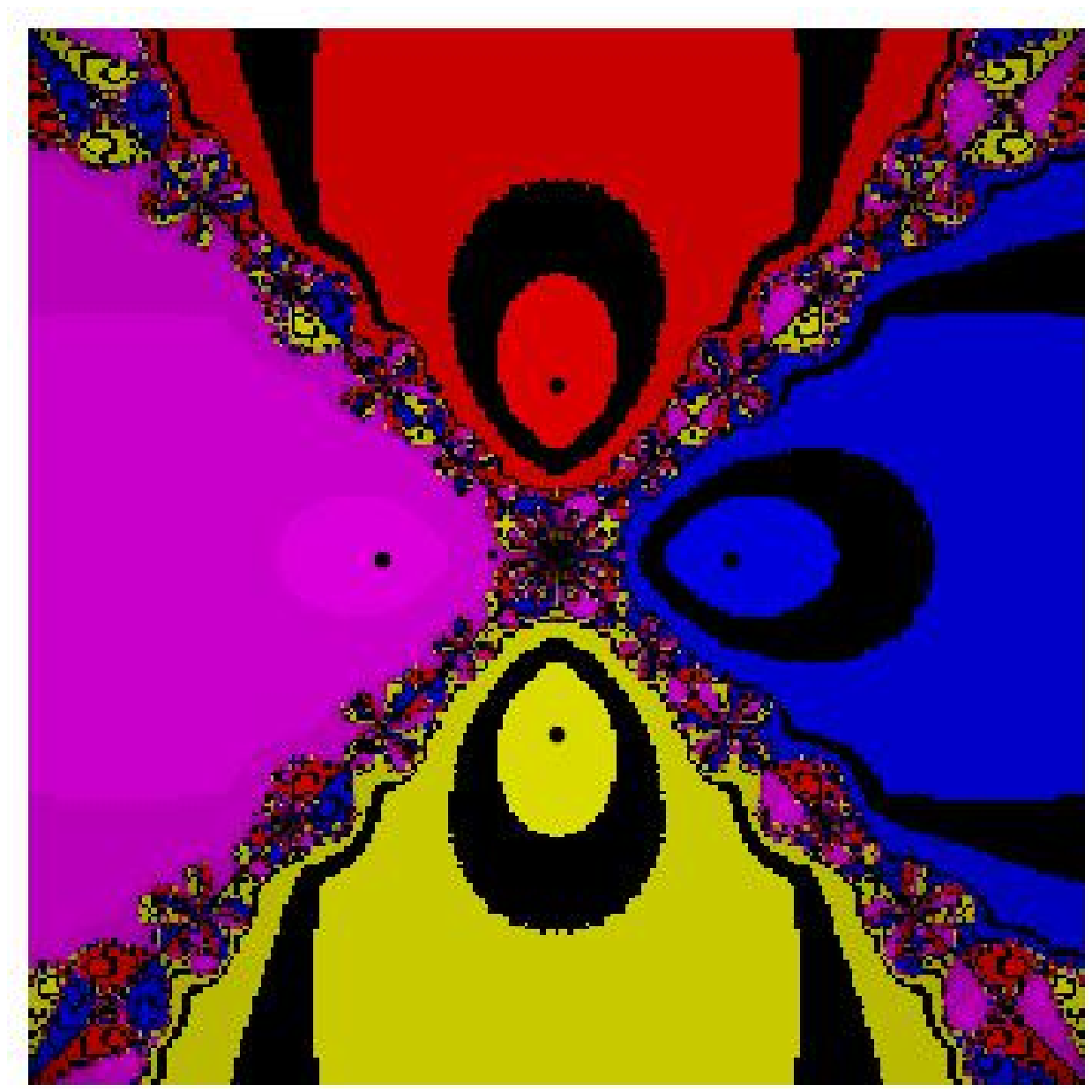}
\caption{WL} \label{fig:figure3}
\end{minipage}
\end{figure}
\\
\\
\begin{figure}[ht]
\begin{minipage}[b]{0.25\linewidth}
\centering
\includegraphics[width=\textwidth]{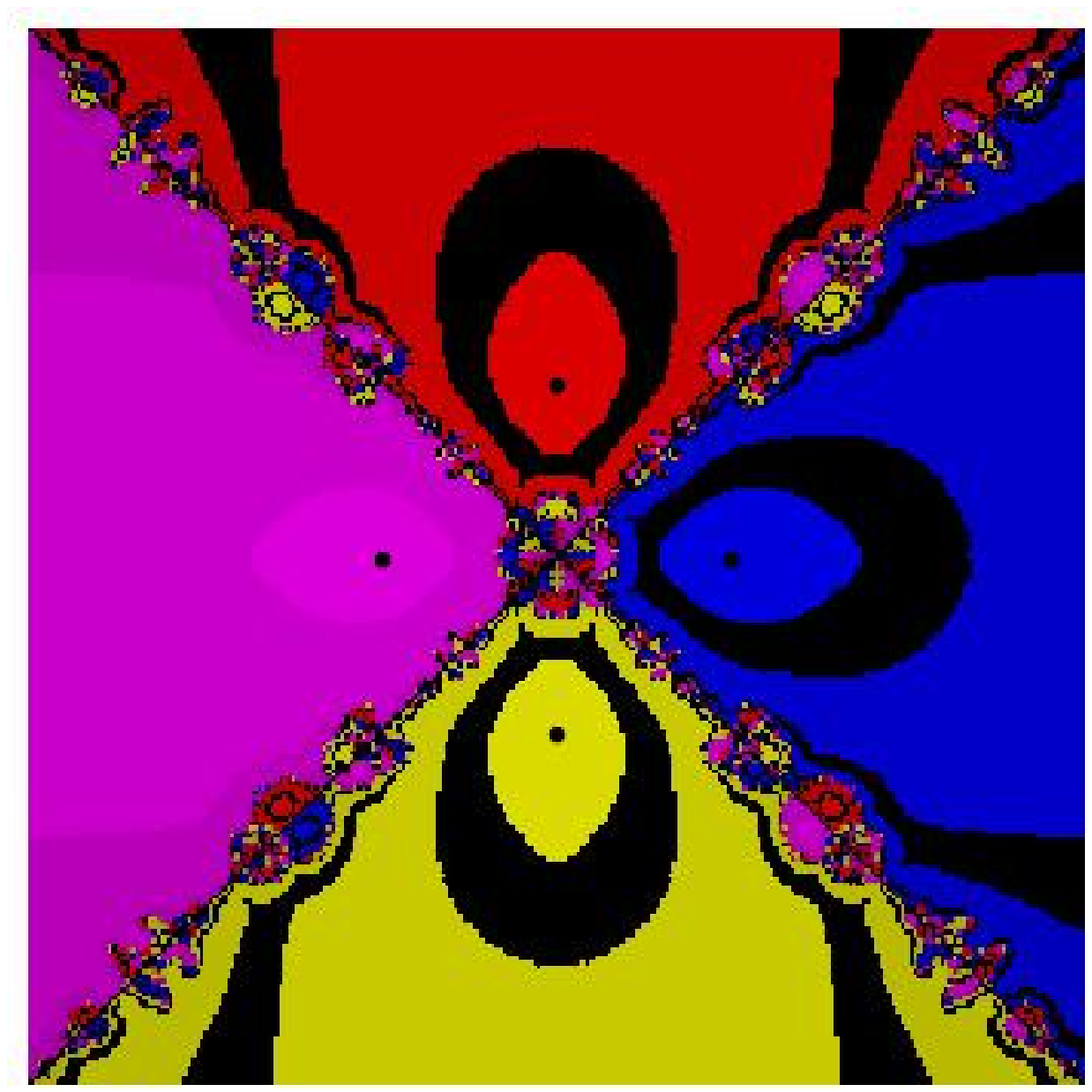}
\caption{SS} \label{fig:figure4}
\end{minipage}
\hspace{0.9cm}
\begin{minipage}[b]{0.25\linewidth}
\centering
\includegraphics[width=\textwidth]{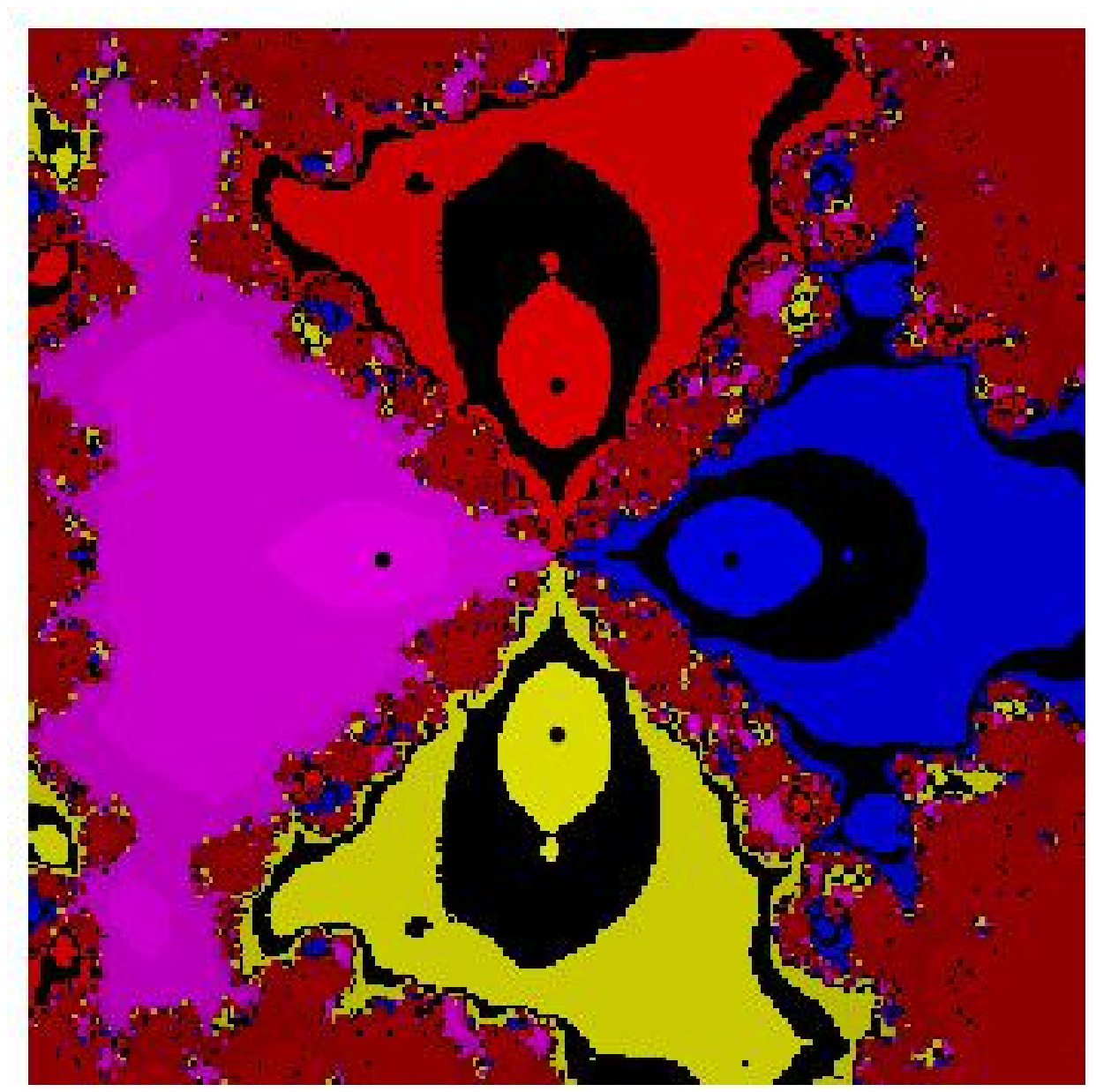}
\caption{BCST} \label{fig:figure5}
\end{minipage}
\hspace{0.9cm}
\begin{minipage}[b]{0.25\linewidth}
\centering
\includegraphics[width=\textwidth]{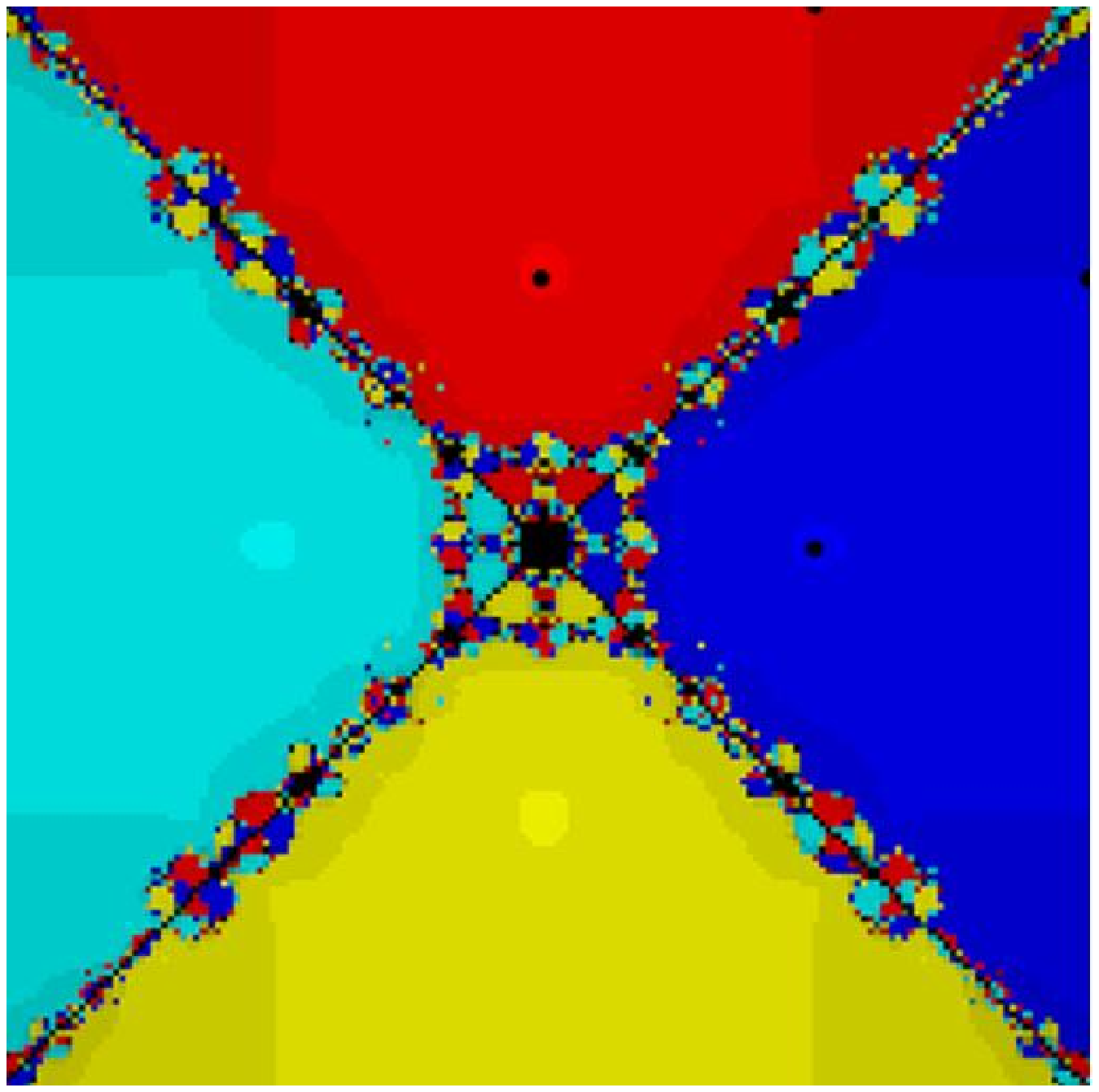}
\caption{CFGT} \label{fig:figure6}
\end{minipage}
\end{figure}
\\

\begin{figure}[ht]
\begin{minipage}[b]{0.25\linewidth}
\centering
\includegraphics[width=\textwidth]{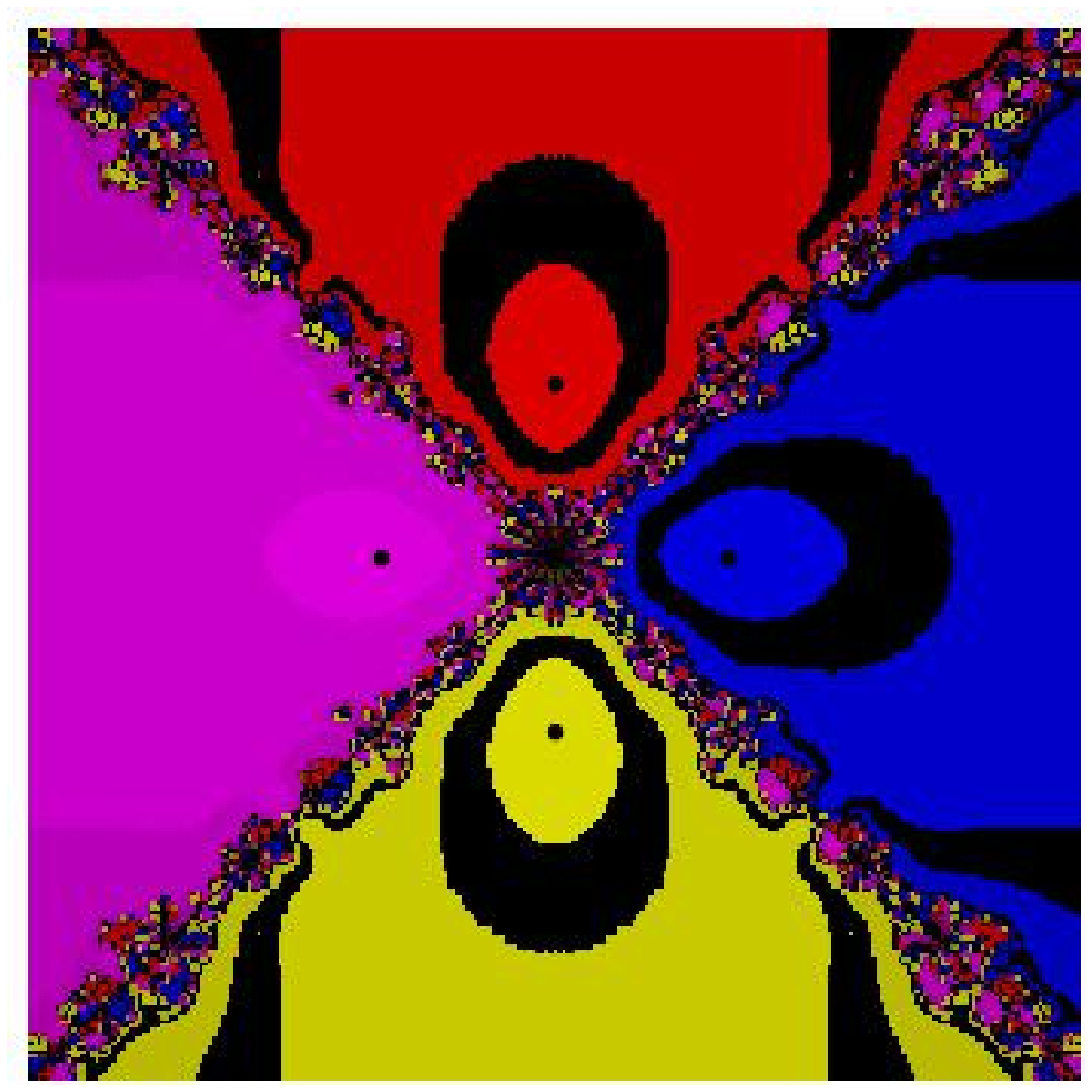}
\caption{CTV} \label{fig:figure7}
\end{minipage}
\hspace{0.9cm}
\begin{minipage}[b]{0.25\linewidth}
\centering
\includegraphics[width=\textwidth]{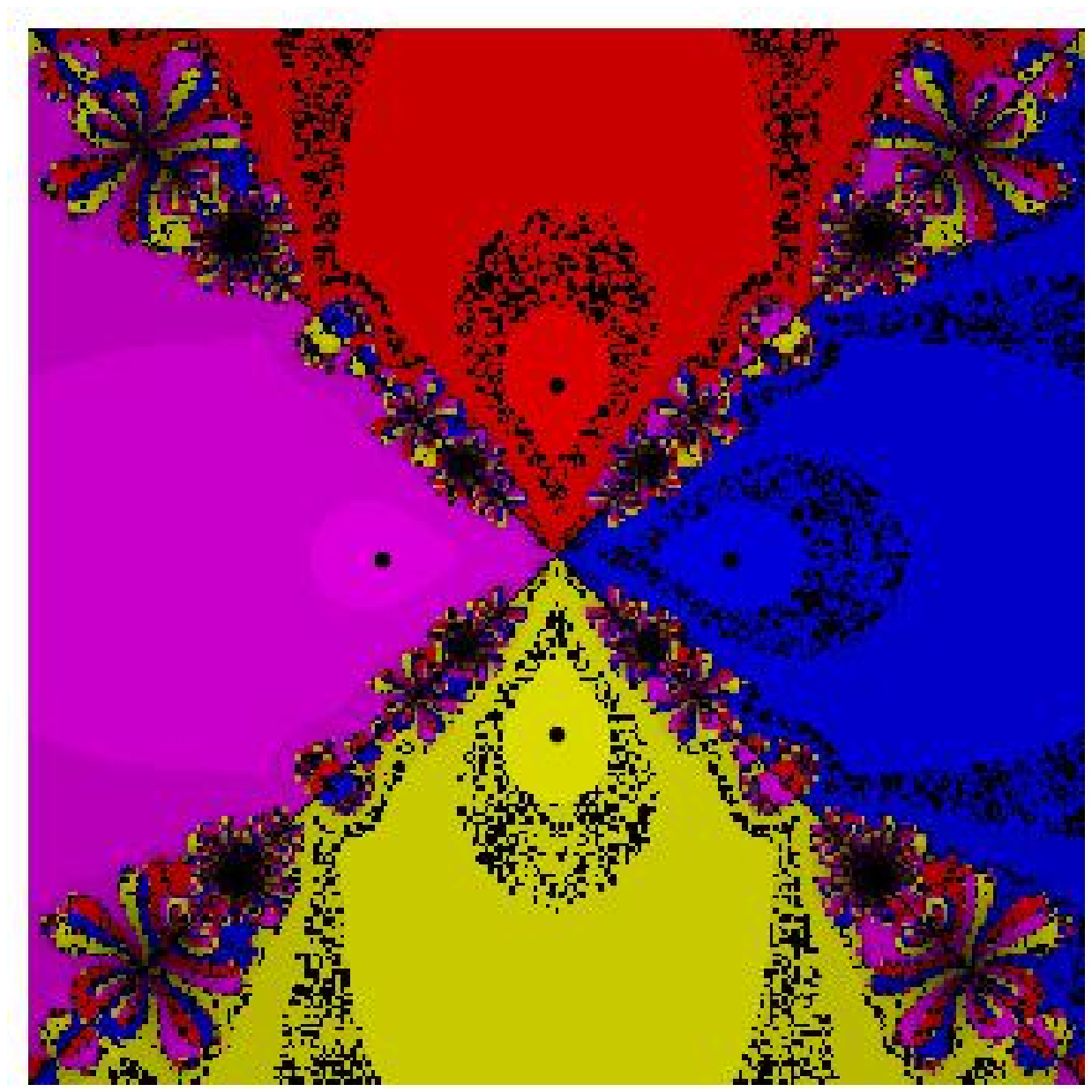}
\caption{TP} \label{fig:figure8}
\end{minipage}
\hspace{0.9cm}
\begin{minipage}[b]{0.25\linewidth}
\centering
\includegraphics[width=\textwidth]{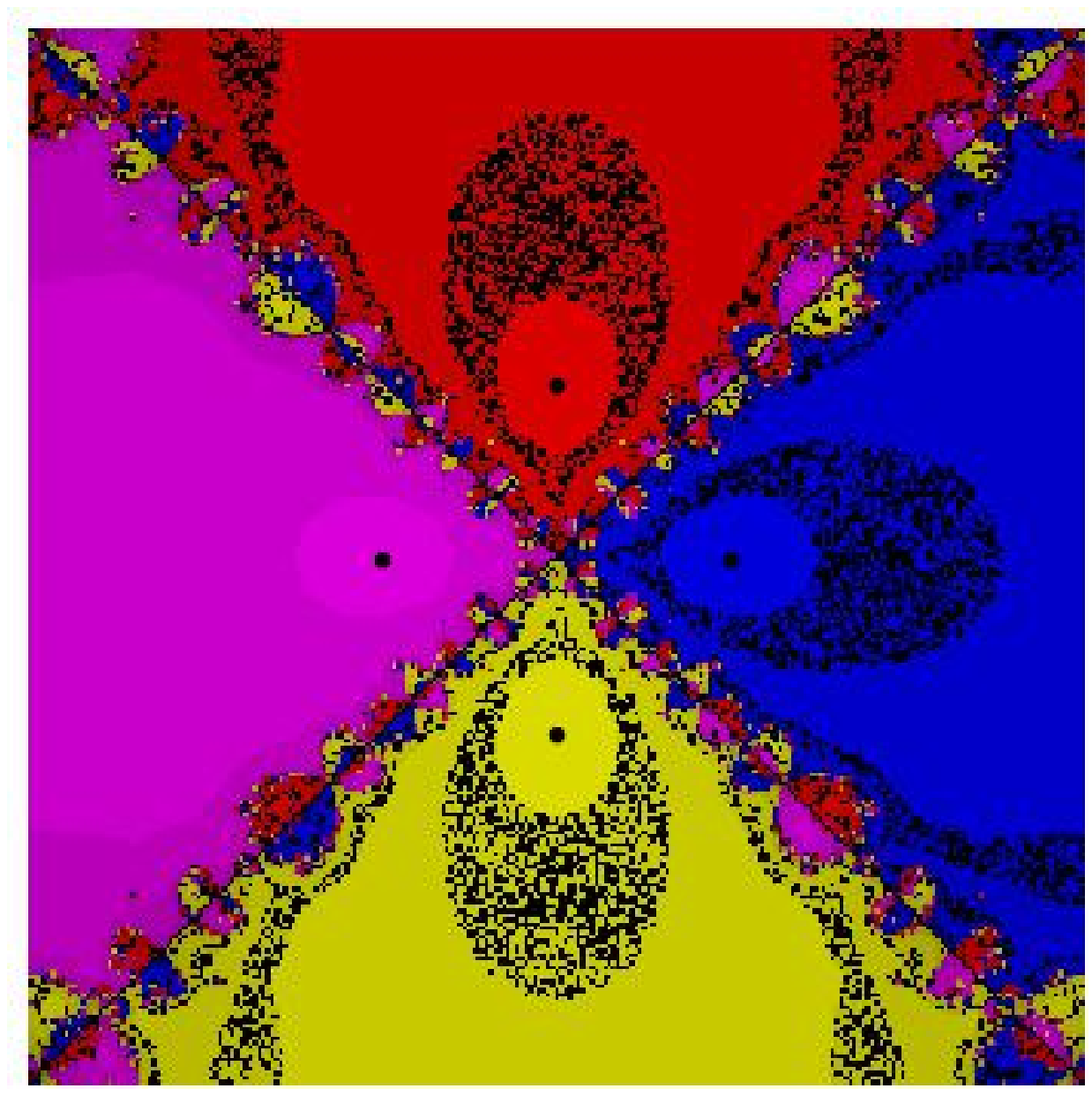}
\caption{CL} \label{fig:figure9}
\end{minipage}
\end{figure}
\newpage
\newpage
\section{Conclusion}

Two new optimal classes of two-point and three-point methods
without memory have been developed which use only three and four
function evaluations per iteration, respectively. Both methods are
based on the Newton and Secant methods. A numerical comparison
with other well-known optimal multi-point methods shows that our
new classes are a valuable alternative to existing optimal
multi-point methods. In addition, a numerical investigation of the basins of attraction of the solutions illustrate that the stability region of our method it typically larger than that of other methods. Indeed, among the eight compared methods, only one shows a larger stability region than our proposed methods.

\end{document}